\tikzset{
ragged border/.style={ decoration={random steps, segment length=1mm, amplitude=0.5mm},
decorate,
}
}
\newcommand{\rootvertex}{\varnothing}
\renewcommand{\emptyset}{\varnothing}
\newcommand{\E}{\mathbf{E}}
\renewcommand{\P}{\mathbf{P}}
\newcommand{\TT}{\mathbb{T}}
\newcommand{\ZZ}{\mathbb{Z}}
\newcommand{\Aa}{\mathcal{A}}
\newcommand{\Uu}{\mathcal{U}}
\newcommand{\Vv}{\mathcal{V}}
\newcommand{\RR}{\mathbb{R}}
\newcommand{\NN}{\mathbb{N}}
\newcommand{\Thom}{\TT^{\mathrm{hom}}}
\newcommand{\chibar}{\overline{\chi}}
\DeclareMathOperator{\Poi}{Poi}
\DeclareMathOperator{\Bin}{Bin}
\DeclareMathOperator{\Geo}{Geo}
\DeclarePairedDelimiter{\abs}{\lvert }{\rvert}\DeclarePairedDelimiter{\floor}{\lfloor }{\rfloor}\DeclarePairedDelimiter{\ceil}{\lceil }{\rceil}
\newcommand{\bigmid}{\;\big\vert\;}
\newcommand{\Biggmid}{\ \Bigg\vert\ }
\newcommand{\WA}{\mathop{\mathrm{WA}}}
\newcommand{\sep}{\:\vert\:}
\newcommand{\bigsep}{\:\big\vert\:}
\newcommand{\biggsep}{\ \bigg\vert\ }
\newcommand{\eqd}{\overset{d}=}
\newcommand{\proc}{\centerdot}
\newcommand{\bigcdot@}[2]{\mathbin{\vcenter{\hbox{\scalebox{#2}{$\m@th#1\bullet$}}}}}
\begin{document}

\section{Introduction}

The \emph{frog model} is a system of interacting walks on a rooted graph
and is one of the most studied examples of the class of $A+B\rightarrow
2B$ models from statistical physics
\cite{TW,KZ,shape,combustion,HJJ1, Fot?}. Particles in this class of
models have one of two states $A$ and $B$. A particle in state $A$
changes to state $B$ on encountering a state~$B$ particle. Once a
particle has state~$B$, it keeps this state for all time. These models
are conservative, meaning that particles are never created nor destroyed.

In the frog model, the particles in state $A$ do not move, while the
particles in state~$B$ perform independent simple random walks in
discrete time. Thus, we refer to the state~$A$ particles as
\emph{asleep} and the state~$B$ particles as \emph{awake} or
\emph{active}. The initial conditions consist of one particle awake at
the root and some numbers of sleeping particles at all other vertices.
The particles are traditionally referred to as frogs, a practice we
continue.

The class of $A+B \rightarrow 2B$ models arose from the study of the
spread of an infection or rumor through a network. Spitzer asked how
fast the infection region (the set of sites visited by a $B$ particle)
grows on $\mathbb{Z}^{d}$. In a series of papers, Kesten and
Sidoravicius gave a partial answer \cite{KS1,KS2, KS3}. Under the
assumption that $A$ and $B$ particles move at the same rate in
continuous time, they proved that the infection region grows linearly
in time and has a limiting shape. That the infection region in the frog
model also grows linearly and has a limiting shape was proven by Alves,
Machado, and Popov in discrete time \cite{shape, random_shape} and
by Ram\'{\i }rez and Sidoravicius in continuous time
\cite{combustion}.

We prove here that if the density of particles is sufficiently large,
the frog model on the infinite $d$-ary tree has a linearly growing
infection region. If the density of particles is small, then linear
growth does not occur, as some sites remain uninfected forever
\cite[Proposition~15]{HJJ2}.

In \cite{HJJ_cover}, we consider the related question of how long
the frog model takes to visit all sites on a finite tree, which we call
the \emph{cover time}. We show the existence of two regimes for the
cover time on the full $d$-ary tree of height~$n$: when the particle
density is $\Omega (d^{2})$, the cover time is $\Theta (n\log n)$; when
the particle density is $O(d)$, the cover time is $\exp (\Theta (
\sqrt{n}))$. See \cite[Theorem~1.1]{HJJ_cover} for a more complete
statement. This paper's results on the infinite tree are an essential
part of our proof of the fast cover time regime on the finite tree. We
apply them to show linear growth of the infected region on finite trees,
far away from leaves. With some more elaborate argument, we then show
that with an extra $O(n\log n)$ time, the leaves are also visited.

\subsection*{Notation}

Given a graph~$G$ and starting vertex, we describe a frog model as a
pair $(\eta ,S)$. For each vertex $v$ other than the starting one,
$\eta (v)$ gives the number of frogs initially sleeping at $v$. The
random variable $S=(S_{\proc } (v,i))_{v\in G,i\geq 1}$ is a collection
of walks satisfying $S_{0}(v,i)=v$. The $i$th particle sleeping at
$v$ on waking follows the path $S_\proc (v,i)$. For the full
construction of the frog model along these lines, see \cite{KZ}.
Typically, $S$ is a collection of simple random walks independent of
each other and of $\eta $, and $(\eta (v))_{v}$ are either i.i.d.\ or
are deterministically equal to some constant, most often $1$. When we
discuss the frog model on a given graph with, say, i.i.d.-$\Poi (
\mu )$ initial conditions, unless we say otherwise we assume that the
paths are simple random walks, and that these independence assumptions
are in effect. For the frog model on a tree, the root is assumed to be
the starting vertex unless stated otherwise. The frog model evolves in
discrete time, though it is easy to show that the results of this paper
hold in continuous time as well (see Remark~\ref{rmk:continuous.time}).
A realization of the frog model is called either \emph{transient} or
\emph{recurrent} depending on whether the starting vertex is visited
infinitely often by frogs. Typically, transience and recurrence are
almost-sure properties; see \cite{KZ}.

We use $\TT _{d}$ to refer to the infinite rooted $d$-ary tree, in which
the root has degree~$d$ and all other vertices have degree $d+1$. We
refer to the vertices at distance~$k$ from the root as level~$k$ of the
tree. We define $\TT _{d}^{n}$ as the full $d$-ary tree of height~$n$,
which is the subset of $\TT _{d}$ made up of levels $0$ to $n$. For any
rooted tree~$T$ and vertex $v\in T$, we denote the subtree of $T$ made
up of $v$ and its descendants by $T(v)$. We use $\varnothing $ to refer
the root of whichever tree we are discussing.

The probability measure $\Geo (p)$ is the distribution on $\{0,1,
\ldots \}$ of the number of failures before the first success in
independent trials that succeed with probability $p$. We also refer to
the geometric distribution on $\{1,2,\ldots \}$ with parameter $p$,
which is the same distribution shifted. In a slight abuse, we will
sometimes use notation like $\Geo (p)$ or $\Bin (n,p)$ to refer not to
a probability distribution but to a random variable with the given
distribution, as in a statement like $\P [\Geo (p) \geq k]=(1-p)^{k}$.

\subsection*{Results}

The first result on the frog model was that with one sleeping frog per
site on $\ZZ ^{d}$, the model is recurrent with probability one for all
$d\geq 1$ \cite{TW}. Next to be pursued were shape theorems for
the process on $\ZZ ^{d}$ demonstrating linear growth in time for the
diameter of the infected region
\cite{shape,random_shape,combustion}. Variations on the frog model with
drift are considered in
\cite{nina_drift1,ghosh_drift,dobler_drift,josh_drift, josh_drift2}. The
recent article \cite{nina_drift2} establishes a phase transition
between transience and recurrence for the one-per-site frog model on
$\mathbb{Z}^{d}$ for $d \geq 2$ as the drift is varied. In
\cite{bmf}, a shape theorem is proven for a Brownian frog model in
Euclidean space. The authors observe a phase transition to superlinear
infection spread at the critical threshold for continuum percolation.

The structure of $\TT _{d}$ induces a natural drift away from the root.
Unlike on the lattice, this drift is counterbalanced by the exponential
increase in the volume of the tree. We have shown that the frog model
exhibits a phase transition between transience and recurrence as $d$ or
the initial configuration is changed. With one sleeping frog per site,
the frog model on a $d$-ary tree is recurrent for $d=2$ and transient
for $d\geq 5$, with the behavior for $d=3,4$ still open
\cite{HJJ1}. Using similar techniques, Rosenberg proves the process is
recurrent on the tree whose levels alternate between two and three
children per vertex \cite{josh_32}. For any $d\geq 2$, the frog
model on a $d$-ary tree with i.i.d.-$\Poi (\mu )$ frogs per site is
transient or recurrent depending on whether $\mu $ is smaller or larger
than a critical value $\mu _{c}(d)>0$ \cite{HJJ2}, and the critical
value satisfies $\mu _{c}(d)=\Theta (d)$ \cite{JJ3_log}.

Our main result is that if $\mu =\Omega (d^{2})$, the infected region
of the frog model on $\TT _{d}$ grows linearly, similar to its behavior
on $\ZZ ^{d}$.

\begin{theorem}
\label{cor:ball}
Consider the frog model on $\TT _{d}$ with i.i.d.-$\Poi (\mu )$ initial
conditions, where $\mu \geq 5d^{2}$. Let $D_{t}$ be the highest level
of the tree at which all vertices have been visited at time~$t$. Then
for some $c,\gamma >0$ depending only on $d$,
\begin{align*}
\P [ D_{t} \leq ct ] \leq e^{-\gamma t}.
\end{align*}
\end{theorem}

We also show that the model is strongly recurrent, in the sense that the
number of visits to the root grows linearly:
%
\begin{theorem}
\label{thm:strong.rec}
Let $V_{t}$ be the number of times the root has been visited up to time
$t$ in the frog model on $\TT _{d}$ with i.i.d.-$\Poi (\mu )$ initial
conditions, where $\mu \geq 5d^{2}$. For some $c$ depending only on
$d$,
%
\begin{align}
\label{eq:strong.rec}
\lim _{t\to \infty } \P [V_{t}\geq ct] = 1.
\end{align}
\end{theorem}

Though our results are stated only for the frog model with Poisson
initial conditions, it is routine to extend them to other initial
conditions using the monotonicity results of \cite{JJ3_order}. See
\cite[Section~4]{JJ3_order} and \cite[Appendix~A]{HJJ_cover} for
examples of such extensions.

While shape theorems for $A+B\to 2B$ models on lattices use the
classical technique of applying the subadditive ergodic theorem, one
needs a different approach on trees. Here, Theorem~$\ref{cor:ball}$
follows without too much difficulty from our statement of strong
recurrence, Theorem~$\ref{thm:strong.rec}$, and its proof is the bulk
of the work. All proofs of recurrence for frog models on trees have used
a bootstrap approach. Essentially, a lower bound on return counts
applied to subtrees is leveraged to obtain an improved lower bound on
return counts in the entire tree. This improved bound for the tree is
then applied again to the subtrees to yield a further improved bound on
the tree, and so on. The argument takes the form of producing an
operator $\Aa $ acting on probability distributions such that if
$\pi $ is the distribution of the return count for each subtree, then
$\Aa \pi $ is the distribution of the return count for the entire tree.
One then argues that $\Aa ^{n}\pi \to \infty $. This approach was used
in \cite{HJJ1,HJJ2,JJ3_log}.

Our proof of strong recurrence in this paper adopts the same approach
in spirit, even using the same operator notation, but in implementation
is completely different from previous arguments. Rather than acting on
return counts, our operator acts on point processes that represent the
time of each return, which are considerably more difficult to work with.
Using stochastic inequalities for point processes, we manage to reduce
the problem to showing that a certain deterministic sequence defined by
a recurrence relation is bounded away from zero. This turns out to be
surprisingly difficult and technical (see Lemma~$\ref{lem:inf.lambda}$).
The proof works only when $\mu =\Omega (d^{2})$, which leaves a gap in
our understanding, as the frog model is transient only when
$\mu =O(d)$. We would be very interested to see a more probabilistic
version of this technical section of the proof. It seems plausible that
this might let it be improved to show strong recurrence for a smaller
value of $\mu $.

\subsection*{Questions}

The most pressing question raised in this paper is the behavior of the
frog model when $d\ll \mu \ll d^{2}$. As we mentioned above, we know
that the frog model on the infinite tree $\TT _{d}$ is transient for
$\mu =O(d)$ \cite[Proposition~15]{HJJ2} and recurrent for
$\mu =\Omega (d)$ \cite[Theorem~1]{JJ3_log}, and in this paper we
show it strongly recurrent when $\mu =\Omega (d^{2})$.
%
\begin{question}
\label{q:weak.recurrence}
Is there a weak recurrence phase for the frog model on $\TT _{d}$, where
the root is occupied for a vanishing fraction of all time and the
infected region grows sublinearly?
\end{question}
 If such a phase exists, it is unclear whether it would occur for an
interval of $\mu $ or just at a single critical value. In
\cite{HJJ_cover}, we ask whether there are more than the two known
regimes for the cover time of finite trees, and whether there are sharp
phase transitions between regimes. We suspect that Question~$
\ref{q:weak.recurrence}$ would be the first step toward resolving those
questions.

\section{Modified frog models}%
\label{sec:modified.frog.models}
As we make our argument, we will usually work with variants of the frog
model where frogs have nonbacktracking paths. We describe these
processes here and relate them back to the usual frog model.

\subsection{The self-similar frog model}%
\label{sec:self.similar}
The \emph{self-similar frog model} on the infinite tree $\TT _{d}$ is
as defined in \cite{JJ3_log}. Put succinctly, it is a modified
version of the frog model with nonbacktracking paths in which all but
the first frog to enter any given subtree are killed. For a more
complete definition, we first define a
\emph{nonbacktracking random walk} (or a \emph{uniform nonbacktracking
random walk} if we wish to distinguish it from other nonbacktracking
walks) from a vertex~$v_{0}$ on any graph with minimum vertex degree
$2$. The walk starts at $v_{0}$. Its first step is to a uniformly random
neighbor of $v_{0}$. On all subsequent steps, it moves to a vertex
chosen uniformly from all its neighbors except the one it just arrived
from.

We now define the self-similar frog model in two steps. First, for each
$v\in \TT _{d}$ and $i\geq 1$, let $S_\proc (v,i)=(S_{j}(v,i))_{j
\geq 0}$ be a random nonbacktracking walk on $\TT _{d}$ starting from
$v$, killed on arrival to $\emptyset $ at times $1$ and beyond. Let
these walks be independent for all $v$ and $i$, and let $S=(S_\proc (v,i))_{v
\in \TT _{d},i\geq 1}$. Let $\eta =(\eta (v),\,v\in \TT _{d})$ be a
given collection of sleeping frog counts. Next, we modify the frog model
$(\eta ,S)$ by killing additional frogs as follows. As the frog model
$(\eta ,S)$ runs, at each step consider all vertices visited for the
first time at that step. Suppose that $v\in \TT _{d}\setminus \{\emptyset
\}$ is one of these vertices, and call its parent $v'$. On this step,
$v$ is necessarily visited by one or more frogs from $v'$. Kill all but
one of these frogs on arrival to $v$, choosing arbitrarily which one
survives. At subsequent steps, kill all frogs moving from $v'$ to
$v$ on arrival to $v$. Also, stop all frogs that move to the root. The
effect is that if a nonroot vertex $v$ is ever visited, then it is
visited only once by a frog originating outside of $\TT _{d}(v)$, and
the frog model on $\TT _{d}(v)$ appears identical in law as the original
one:

\begin{lemma}
[Self-similarity of the self-similar frog model]%
\label{lem:self.similarity}
Let $\emptyset '$ be the child of the root first visited by the
self-similar frog model. Let $v\in \TT _{d}\setminus \{\rootvertex \}$,
and let $v'$ be the parent of $v$. The following two processes are
identical in law:
\begin{enumerate}[(i)]%
\item
the self-similar model viewed on $\{v'\}\cup \TT _{d}(v)$, with frogs
killed on moving from $v$ to $v'$, from the first time of visit to
$v$ onward (conditional on $v$ being visited);
\item
the self-similar model viewed on
$\{\emptyset \}\cup \TT _{d}(\emptyset ')$ from time~$1$ onward.
\end{enumerate}
\end{lemma}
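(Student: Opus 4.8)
The plan is to realize both of the processes (i) and (ii) as one and the same measurable functional $\Phi$ applied to a triple consisting of one ``incoming'' nonbacktracking path, an i.i.d.\ family of nonbacktracking walk paths indexed by the vertices of a copy of $\TT_d$, and an i.i.d.\ family of sleeping-frog counts on that same copy. Once this is done, the lemma reduces to checking that the triple feeding into (i) and the triple feeding into (ii) have the same joint law, which follows by transporting everything through the canonical rooted-tree isomorphism between $\TT_d(v)$ and $\TT_d(\emptyset')$.

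Two elementary facts drive the identification. The first is combinatorial: a uniform nonbacktracking walk on $\TT_d$ that steps from the parent of a vertex $u$ into $u$ thereafter descends through $\TT_d(u)$ one level per step, never returning to $u$ and never leaving $\TT_d(u)$; consequently such a walk, sitting at $u$ having just arrived from the parent of $u$, is distributed exactly like the active root frog of the self-similar model rooted at $u$ (for which being ``killed on arrival to the root at times $\ge 1$'' is vacuous). The second is a separation remark: every vertex of $\TT_d(v)\setminus\{v\}$ lies behind $v$, so up to the first-visit time $\tau_v$ of $v$ no frog sleeping inside $\TT_d(v)$ has woken and none of the walks $S_\proc(w,i)$ with $w\in\TT_d(v)$ has been consulted; moreover the frog that first reaches $v$ arrives along the edge from $v'$ and originates at a vertex outside $\TT_d(v)$, so its walk is independent of the walks inside $\TT_d(v)$.

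Next I would run the self-similar model as a time-ordered exploration (revealing each walk step only when the corresponding frog takes it, and each count $\eta(w)$ only when $w$ is first visited), so that $\tau_v$ is a stopping time for the associated filtration $\Ff$. Working on $\{\tau_v<\infty\}$ and conditioning on $\Ff_{\tau_v}$: the strong Markov property of the nonbacktracking walk (a Markov chain on directed edges) gives that the future of the single surviving incoming frog is a fresh nonbacktracking path at $v$ having just come from $v'$, hence, by the combinatorial fact, a path descending into $\TT_d(v)$; by the separation remark the families $\{S_\proc(w,i):w\in\TT_d(v),\,i\ge1\}$ and $\{\eta(w):w\in\TT_d(v)\}$ are still untouched, hence fresh with their original i.i.d.\ laws; and all of this is independent of $\Ff_{\tau_v}$. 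The restricted process in (i), run from time $\tau_v$ onward, is then a fixed measurable functional $\Phi$ of this triple: $\Phi$ runs the self-similar killing dynamics on $\TT_d(v)$ with the incoming frog playing the role of the active frog at the root $v$, with the $\eta(v)$ frogs at $v$ and the $\eta(w)$ frogs at each $w\ne v$ as sleepers, and with any frog that steps from $v$ up to $v'$ removed.

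Finally I would observe that process (ii), viewed from time~$1$ onward, is this same functional $\Phi$ applied to the analogous triple for $\TT_d(\emptyset')$: at time~$1$ nothing has happened beyond the active frog stepping from $\emptyset$ to $\emptyset'$, so the families $\{S_\proc(w,i):w\in\TT_d(\emptyset')\}$ and $\{\eta(w):w\in\TT_d(\emptyset')\}$ are still fresh with their i.i.d.\ laws, the active frog's future is a fresh nonbacktracking path at $\emptyset'$ having just come from $\emptyset$, and no conditioning is needed since $\emptyset'$ is visited at time~$1$ by definition. The canonical rooted-tree isomorphism from $\TT_d(v)$ onto $\TT_d(\emptyset')$, extended by $v'\mapsto\emptyset$, carries the first triple to the second with the same distribution (using that the walk family is i.i.d.\ and the count field is i.i.d.), so $\Phi$ applied to the two sides agrees in law, which is exactly the assertion of the lemma. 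The one place demanding real care is the stopping-time step: the exploration must be arranged so that at time $\tau_v$ it has consulted the incoming frog's walk only through time $\tau_v$ and has consulted none of the walks or counts strictly inside $\TT_d(v)$, so that every component of the triple handed to $\Phi$ is genuinely fresh and independent of the conditioned past; the rest is bookkeeping of initial conditions and killing rules across the isomorphism.
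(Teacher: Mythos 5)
Your argument is correct: the key points — that a nonbacktracking walk on a tree is self-avoiding and hence descends forever after entering $\TT_d(v)$ from $v'$, that nothing inside $\TT_d(v)$ (walks or counts) is revealed before the first visit to $v$, and that the strong Markov property plus the i.i.d.\ structure then hands the restricted process a fresh triple with the same law as the one driving the model on $\{\emptyset\}\cup\TT_d(\emptyset')$ from time~$1$ — are exactly what makes the lemma true. The paper itself states this lemma without proof, treating it as immediate from the construction of the self-similar model, so your write-up is simply a careful formalization of the same reasoning rather than a different route.
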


\subsection{The nonbacktracking frog model}%
\label{sec:nonbacktracking}
First, define a \emph{root-biased nonbacktracking random walk from
$v_{0}$ on $\TT _{d}$} as a walk distributed as follows. We set
$X_{0}=v_{0}$, and then we choose $X_{1}$ uniformly from the neighbors
of $X_{0}$. Conditionally on $X_{0},\ldots ,X_{i}$, we choose
$X_{i+1}$ as follows: If $X_{i}=\varnothing $, choose $X_{i+1}$ to be
$X_{i-1}$ with probability $1/d^{2}$ and to be each of the other
children of the root with probability $(d+1)/d^{2}$. Otherwise, choose
$X_{i+1}$ uniformly from the neighbors of $X_{i}$ other than
$X_{i-1}$. It turns out that this describes the behavior of the walk
that results from deleting all excursions of a simple random walk away
from its eventual path (see Appendix~\ref{sec:decompose}). The odd
behavior at the root results from the asymmetry of the tree there. We
then define the \emph{nonbacktracking frog model on $\TT _{d}$} as the
frog model whose paths are independent root-biased nonbacktracking
random walks on $\TT _{d}^{n}$.

Recalling that $\TT _{d}^{n}$ consists of levels $0,\ldots ,n$ of
$\TT _{d}$ define a \emph{root-biased nonbacktracking random walk from
$v_{0}$ on $\TT _{d}^{n}$} just as above, except that when $X_{i}$ is
a leaf of $\TT _{d}^{n}$, we define $X_{i+1}$ to be the parent of
$X_{i}$. Define the
\emph{nonbacktracking frog model on $\TT ^{n}_{d}$} to have these walks
as its paths.

The following result, which we prove in Appendix~\ref{sec:decompose},
demonstrates that the time change for the underlying random walks has
little effect. This allows us to work with nonbacktracking frog models
when we prove Theorem~$\ref{cor:ball}$.
%
\begin{proposition}
\label{cor:time.change}
Let $(\eta ,S)$ and $(\eta ,S')$ be respectively the usual and the
nonbacktracking frog models, both on $\TT _{d}$ or both on $\TT ^{n}
_{d}$, with arbitrary initial configuration $\eta $. There exists a
coupling of the frog models $(\eta ,S)$ and $(\eta ,S')$ such that the
following holds: For any $b>\log d$, there exists $C=C(b)$ such that all
vertices visited in $(\eta ,S')$ by time~$t$ are visited in
$(\eta ,S)$ by time~$Ct$ with probability $1-e^{-bt}$.
\end{proposition}

\begin{remark}
\label{rmk:continuous.time}
To extend the results of this paper to continuous time, one could prove
a version of this proposition in which a continuous-time frog model is
coupled with a discrete-time nonbacktracking frog model.
\end{remark}

We mention that frog models on the finite tree $\TT ^{n}_{d}$ do not
come up in this paper. We include such models in Proposition~$
\ref{cor:time.change}$ because the proof is nearly identical for them,
and we need the result for our paper \cite{HJJ_cover} on cover
times for the frog model.

\section{Strong recurrence for the self-similar model}%
\label{sec:strong.recurrence}
In this section, we show that in the self-similar frog model on
$\TT _{d}$ with sufficiently large initial density of frogs, the visits
to the root are bounded from below by a Poisson process. Our main
results, Theorems $\ref{cor:ball}$ and~$\ref{thm:strong.rec}$, will
follow in the next section as easy corollaries.

To state the result, define the \emph{return process} of a given frog
model to be a point process consisting of a point at $t$ for each frog
occupying the root at time $t\geq 1$. Note that the return process will
be supported on the positive even integers, as frogs never visit the
root at odd times by periodicity. For point processes $\xi _{1}$ and
$\xi _{2}$, we say that $\xi _{1}$ is stochastically dominated by
$\xi _{2}$ (denoted $\xi _{1}\preceq \xi _{2}$) if there exists a coupling
of $\xi _{1}$ and $\xi _{2}$ such that every point of $\xi _{1}$ is
contained in $\xi _{2}$. See Section~\ref{sec:dominance.point.processes}
for more background material on stochastic dominance between point
processes.

\begin{theorem}
\label{thm:strong}
Consider the self-similar frog model on $\TT _{d}$ with i.i.d.-$
\Poi (\mu )$ initial conditions. For any $d\geq 2$, $\alpha >0$, and
$\mu \geq 3d(d+1)+\alpha (d+1)$, the return process stochastically
dominates a Poisson point process with intensity measure $\sum _{k=1}
^{\infty }\alpha \delta _{2k}$.
\end{theorem}

Ram\'{\i }rez and Sidoravicius proved that the field of site
occupation counts for the one-per-site frog model on $\ZZ ^{d}$ in
continuous time converges to independent Poisson random variables with
mean one \cite[Theorem~1.2]{combustion}. Compared to this result,
ours provides useful information across times, but it only gives lower
bounds. Though our result is stated for occupation times at the root
only, by Lemma~$\ref{lem:self.similarity}$ it can be applied to any site
starting at the time of its first visit.

The proof of Theorem~$\ref{thm:strong}$ is in three steps, Lemmas
$\ref{lem:theta_n}$, $\ref{lem:chi_n}$, and~$\ref{lem:inf.lambda}$. Let
$\theta $ be the return process of the frog model in Theorem~$
\ref{thm:strong}$. First, we show that $\theta \succeq \Aa ^{n}0$ for
all $n$, where $\Aa $ is a certain operator acting on the laws of point
processes and $0$ is the empty point process. Second, we prove by
induction that $\Aa ^{n}0$ dominates a Poisson point process with
intensity measure $\sum _{k=1}^{n}\lambda _{k}\delta _{2k}$ for an explicit
sequence $(\lambda _{k})_{k\geq 1}$ depending on $\mu $. The final step
is to show that $\lambda _{k}$ remains bounded away from zero as
$k\to \infty $. Though $\lambda _{k}$ can be explicitly computed given
$\mu $, its formula is a complicated expression involving $\lambda
_{1},\ldots ,\lambda _{k-1}$, and it takes some effort to show that
$\lambda _{k}$ does not decay to zero as $k$ tends to infinity. As we
mentioned in the introduction, this argument is a sort of bootstrap.
Every time we iterate the operator $\Aa $, the self-similarity of the
frog model together with a lower bound on the return process yields an
improved lower bound on the return process.

\subsection{Stochastic dominance for point processes}%
\label{sec:dominance.point.processes}
Let $X$ and $Y$ be random variables. We say that $X$ is stochastically
dominated by $Y$, notated $X\preceq Y$, if $\P [X\geq x]\leq \P [Y
\geq x]$ for all $x\in \RR $. This is equivalent to the existence of a
coupling of $X$ and $Y$ where $X\leq Y$ a.s.

If $X\preceq Y$, then $\P [X=0]\geq \P [Y=0]$ by the definition of
stochastic dominance. If $X$ is Poisson and $Y$ is a mixture of
Poissons, then the converse is true as well:
%
\begin{lemma}
[{\cite[Theorem~3.1(b)]{MSH}}]%
\label{lem:MSH}
Let $X\sim \Poi (\lambda )$, and let $Y\sim \Poi (U)$ for some
nonnegative random variable~$U$. Then the following are equivalent:
\begin{enumerate}[(i)]%
\item
$X\preceq Y$,
\label{i:st}
\item
$\P [X=0]\geq \P [Y=0]$, and
\label{i:zero.prob}
\item
$\lambda \leq -\log \E e^{-U}$.
\label{i.log}
\end{enumerate}
\end{lemma}
 See also \cite[Section~2]{JJ3_log} for a quick presentation of the
proof from \cite{MSH}.

A \emph{Cox process} is a mixture of Poisson point processes. The next
result is a sufficient condition for stochastic dominance of a Poisson
process by a Cox process in the same spirit as the previous lemma.
Notationally, we view a point process $\xi $ as a nonnegative
integer--valued random measure on $\RR $. For $U\subseteq \RR $, we
denote the restriction of the point process to $U$ by $\xi |_{U}$. We
write the number of atoms of $\xi $ found in $U$ as $\xi (U)$,
abbreviating $\xi (\{x_{1},\ldots ,x_{n}\})$ to $\xi \{x_{1},\ldots ,x
_{n}\}$.

\begin{lemma}
\label{prop:Cox.lower}
Let $\xi $ be a Cox process supported on a countable or finite set
$\{x_{1},x_{2},\ldots \}$. Let $\Uu _{n}=\{x_{1},\ldots ,x_{n}\}$.
Suppose that
\begin{align*}
\P \bigl [ \xi \{x_{1}\}=0 \bigr ]
&\leq p_{1},
\\
\intertext{and for all $n\geq 2$,}
\P \Bigl [ \xi \{x_{n}\}=0 \;\Big \vert \; \xi |_{\Uu _{n-1}}\Bigr ]
&
\leq p_{n}\text{ a.s.}
\end{align*}
Then $\xi $ stochastically dominates a Poisson point process with
intensity measure
\begin{align*}
\sum _{n=1}^{\infty } (-\log p_{n})\delta _{x_{n}}.
\end{align*}
\end{lemma}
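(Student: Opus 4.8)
The plan is to realize $\xi$ explicitly as a Cox process and then build, site by site along $x_1,x_2,\ldots$, a coupling that places the target Poisson process underneath it. Write $\lambda_n:=-\log p_n$ (we may assume each $p_n>0$, the statement being degenerate otherwise), so that the process we must dominate is the $\zeta$ with $\zeta\{x_n\}\sim\Poi(\lambda_n)$ independently over $n$. Since $\xi$ is a Cox process supported on $\{x_1,x_2,\ldots\}$, there is a nonnegative random sequence $(U_n)_{n\ge1}$ so that, conditionally on $(U_n)_n$, the counts $\xi\{x_n\}$ are independent with $\xi\{x_n\}\sim\Poi(U_n)$. In particular, for each $n$ the conditional law $\mathcal L\bigl(\xi\{x_n\}\mid\xi|_{\Uu_{n-1}}\bigr)$ is $\Poi$ of (the conditional law of) $U_n$, hence a mixture of Poissons, and
\[
  \P\bigl[\xi\{x_n\}=0\mid\xi|_{\Uu_{n-1}}\bigr]=\E\bigl[e^{-U_n}\mid\xi|_{\Uu_{n-1}}\bigr].
\]
So the hypothesis $\P[\xi\{x_n\}=0\mid\xi|_{\Uu_{n-1}}]\le p_n$ says exactly that $\lambda_n\le-\log\E[e^{-U_n}\mid\xi|_{\Uu_{n-1}}]$ almost surely, and \thref{lem:MSH}, applied conditionally, yields $\Poi(\lambda_n)\preceq\mathcal L\bigl(\xi\{x_n\}\mid\xi|_{\Uu_{n-1}}\bigr)$ a.s.\ (with the obvious unconditional version at $n=1$).

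From here I would construct the coupling by induction on $n$. Enlarge the probability space to carry i.i.d.\ uniform variables $W_1,W_2,\ldots$ independent of $\xi$. Since $\xi|_{\Uu_{n-1}}$ takes values in the countable set $\ZZ_{\ge0}^{\,n-1}$, no regular-conditional-distribution machinery is needed: for each value $\vec m$, the conditional law $\nu_{\vec m}:=\mathcal L\bigl(\xi\{x_n\}\mid\xi|_{\Uu_{n-1}}=\vec m\bigr)$ dominates $\Poi(\lambda_n)$, so the quantile (monotone) coupling of these two distributions on $\ZZ_{\ge0}$ furnishes a kernel $j\mapsto\kappa_n(\vec m;j,\cdot)$ supported on $\{0,\dots,j\}$ with $\sum_j\nu_{\vec m}(j)\,\kappa_n(\vec m;j,\cdot)=\Poi(\lambda_n)$. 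I then let $\zeta\{x_n\}$ be the sample from $\kappa_n\bigl(\xi|_{\Uu_{n-1}};\xi\{x_n\},\cdot\bigr)$ selected by $W_n$. By construction $\zeta\{x_n\}\le\xi\{x_n\}$ always; and since $W_n$ is independent of $\bigl(\xi,W_1,\dots,W_{n-1}\bigr)$, one gets $\mathcal L\bigl(\zeta\{x_n\}\mid\xi|_{\Uu_{n-1}},W_1,\dots,W_{n-1}\bigr)=\Poi(\lambda_n)$. As $\zeta\{x_1\},\dots,\zeta\{x_{n-1}\}$ are functions of $\xi|_{\Uu_{n-1}}$ and $W_1,\dots,W_{n-1}$, passing to the coarser $\sigma$-algebra shows $\zeta\{x_n\}\sim\Poi(\lambda_n)$ and independent of $\zeta\{x_1\},\dots,\zeta\{x_{n-1}\}$. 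Inductively the $\zeta\{x_n\}$ are independent with $\zeta\{x_n\}\sim\Poi(\lambda_n)$, i.e.\ $\zeta$ is a Poisson point process with intensity $\sum_n\lambda_n\delta_{x_n}=\sum_n(-\log p_n)\delta_{x_n}$, while every atom of $\zeta$ lies in $\xi$; this is the asserted stochastic domination.

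The one point requiring care is the bookkeeping in this induction: one must ensure the sequentially built counts $\zeta\{x_n\}$ emerge genuinely independent with the correct marginals, rather than merely conditionally dominated by $\xi$. Drawing on fresh auxiliary randomness $W_n$ at each step and tracking the filtration generated by $\bigl(\xi|_{\Uu_n},W_1,\dots,W_n\bigr)$ is exactly what makes the marginals and the independence come out right; everything else is routine because all the state spaces here are discrete. (Alternatively, one could verify stochastic domination of the associated $\ZZ_{\ge0}^{\mathbb N}$-valued vectors on each finite coordinate projection by the same inductive coupling and then invoke Strassen's theorem in the limit, but the direct construction above already delivers the full statement.)
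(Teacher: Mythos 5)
Your proof is correct and follows essentially the same route as the paper: you apply \thref{lem:MSH} to the conditional laws (which are mixed Poissons by the Cox structure) and then couple coordinate by coordinate. The only difference is that you construct the sequential monotone coupling by hand, whereas the paper delegates exactly this step to the multivariate stochastic-dominance theorem of Shaked and Shanthikumar (Theorem~6.B.3 of \cite{SS}).
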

\begin{proof}
This follows from \cite[Theorem~6.B.3]{SS} combined with Lemma~$
\ref{lem:MSH}$. This amounts to coupling $\xi \{x_{1}\}$ with a Poisson
random variable, then conditioning on $\xi \{x_{1}\}$ and coupling
$\xi \{x_{2}\}$ with an independent Poisson random variable, then
conditioning on $\xi \{x_{1}\}$ and $\xi \{x_{2}\}$ and coupling
$\xi \{x_{3}\}$ with an independent Poisson, and so on.
\end{proof}

\subsection{The operator $\Aa $ and its connection to the frog model}

Our first task is to define an operator~$\Aa =\Aa _{d,\mu }$ acting on
distributions of point processes. For a point process $\xi $ with distribution~$
\nu $, we will abuse notation slightly and write $\Aa \xi $ rather than
$\Aa \nu $.

Let us first explain the idea behind $\Aa $. The initial frog in the
self-similar frog model moves from the root $\varnothing $ down the
tree, first to a child $\varnothing '$, and then to one of its children,
$v_{1}$. Let $v_{2},\ldots ,v_{d}$ be the other children of
$\varnothing '$. The basic idea is to imagine the frog model as
occurring only on these vertices. When a frog moves to $x_{i}$, we close
our eyes to the subtree $\TT _{d}(x_{i})$, imagining it as a black box
that will occasionally emit frogs going back towards the root. We then
define $\Aa \xi $ as the return process of this frog model if we assume
that each black box emits frogs at times given by an independent copy
of $\xi $, shifted by the time of activation.

To formally define $\Aa \xi $, consider a modified frog process taking
place on a star graph with center $\rho '$ and leaf vertices
$\rho ,u_{1},\ldots ,u_{d}$ (see Figure~\ref{fig:star}). One should
think of $\rho $ and $\rho '$ as paralleling $\varnothing $ and
$\varnothing '$, and of $u_{1},\ldots ,u_{d}$ as as paralleling
$v_{1},\ldots ,v_{d}$. We define $\Aa \xi $ as the distribution of the
point process of times of visits to $\rho $ in the process defined as
follows.
\begin{enumerate}[(i)]%
\item
At time~$0$, there is a single particle awake at $\rho $ and
$\Poi (\mu )$ sleeping particles at $\rho '$. There are also particles
asleep on each of $u_{1},\ldots ,u_{d}$, as will be described in
\ref{i:u_i}.
\item
The initially active particle moves from $\rho $ to $\rho '$ to
$u_{1}$ and then halts.
\item
The particles at $\rho '$ are woken at time~$1$ by the initial particle.
In the next step, each particle moves independently to a neighbor chosen
uniformly at random. All particles halt after this step.
\item
When site $u_{i}$ is visited, the particles there undergo a delayed
activation as follows. For each $i$, let $\xi ^{(i)}$ be an independent
copy of $\xi $. For each atom in $\xi ^{(i)}$, there is a sleeping
particle on $u_{i}$. If the atom is at $k$, then this sleeping particle
is awoken $k-2$ steps after $u_{i}$ is first visited, and it makes its
first move (necessarily to $\rho '$) one time step after this. In its
next step, it chooses uniformly at random from the neighbors of
$\rho '$ except for $u_{i}$ (i.e., it takes a random nonbacktracking
step). It then halts.
\label{i:u_i}
\end{enumerate}

\begin{figure}
\begin{center}
\begin{tikzpicture}[xscale=3,yscale=2,vert/.style={circle,fill,inner sep=0,
minimum size=0.125cm,draw},
active/.style={rectangle, fill, inner sep=0, minimum size=0.25cm,draw},
sleeping/.style={rectangle, inner sep=0, minimum size=0.25cm,draw}]

\path (0,0) node[vert,label=below left:$\rho '$] (root') {}
(-1,.2) node[vert,label=below:$\rho $] (root) {}
(1,-0.8) node[vert,label=below:$u_{1}$] (v1) {}
(1,-0.15) node[vert,label=below:$u_{2}$] (v2) {}
(1,.8) node[vert,label=below:$u_{d}$] (vd) {}
(1,.34) node {$\vdots $};

\draw [thick] (root) -- (root') -- (v1);
\path[every node/.style={font=\sffamily \small }]
(root') edge [-{To[]},bend right=35,blue] node[fill=white] {$\textcolor{black}{\Aa \xi }$} (root)
(v1) edge [-{To[]},bend left=35,blue] node[fill=white] {$\textcolor{black}{\xi ^{(1)}}$} (root')
(v2) edge [-{To[]},bend right=30,blue] node[fill=white] {$\textcolor{black}{\xi ^{(2)}}$} (root')
(vd) edge [-{To[]},bend right=35,blue] node[fill=white] {$\textcolor{black}{\xi ^{(d)}}$} (root')
;

\draw [dashed] (root')--(v2)
(root')--(vd);
\end{tikzpicture}

\caption{The point process $\mathcal{A} \xi $ records the visit times to $\rho $
in a point process that behaves somewhat like the frog model. The difference is that
when $u_{i}$ is first visited, particles are released not immediately but at the times
in $\xi ^{(i)}$, which is an independent copy of the point process $\xi $.
One should think of this system as a frog
model where we ignore the activity past level~$2$ of the tree, paying attention
only to when particles emerge back to level~$1$.}
\label{fig:star}
\end{center}
\end{figure}
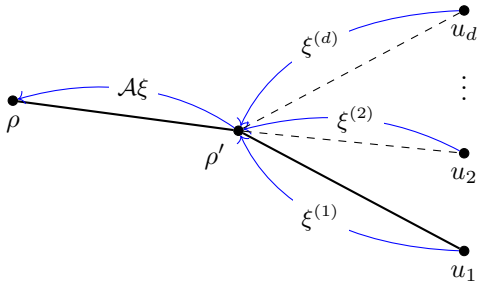

The next lemma gives the connection between this operator $\Aa $ and the
frog model. The proof is in the same in spirit as
\cite[Lemma~3.5]{JJ3_log}.
%
\begin{lemma}
\label{lem:theta.fixed}
Let $\theta $ be the return process considered in Theorem~$
\ref{thm:strong}$. Then $\theta \eqd \Aa \theta $.
\end{lemma}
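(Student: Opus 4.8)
The plan is to exhibit a coupling between the self-similar frog model restricted to the vertices $\{\varnothing,\varnothing',v_1,\ldots,v_d\}$ (where $\varnothing'$ is the child of the root first visited and $v_1,\ldots,v_d$ are its children) and the star-graph process defining $\Aa$, showing that the two return processes at $\rho\leftrightarrow\varnothing$ agree in law. The key input is the self-similarity statement, \thref{lem:self.similarity}: once $v_i$ is first visited, the self-similar model on $\{\varnothing'\}\cup\TT_d(v_i)$ (with frogs killed on stepping from $v_i$ up to $\varnothing'$) is a time-shifted copy of the self-similar model on $\{\varnothing\}\cup\TT_d(\varnothing')$ viewed from time $1$. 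But the return process of that latter model to $\varnothing'$ is, after subtracting the shift, exactly $\theta$. So the sequence of times at which frogs emerge from $\TT_d(v_i)$ back into $\varnothing'$ is distributed as $\xi^{(i)}=\theta^{(i)}$, an independent copy of $\theta$, shifted by the first-visit time of $v_i$ — which is precisely how item \ref{i:u_i} in the definition of $\Aa$ feeds particles out of $u_i$.

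First I would set up the bookkeeping on the genuine tree: the initial frog walks $\varnothing\to\varnothing'\to v_1$; the $\Poi(\mu)$ frogs sleeping at $\varnothing'$ wake at time $1$ and each takes one nonbacktracking step at time $2$, going to a uniformly random neighbor of $\varnothing'$ (namely $\varnothing$ or one of $v_1,\ldots,v_d$ — note $v_1$ is allowed even though it was just visited, since these frogs did not arrive from it); this matches items (ii)–(iii) of the $\Aa$-construction with $\rho',\rho,u_i$ in place of $\varnothing',\varnothing,v_i$. Then I would argue, using \thref{lem:self.similarity} together with independence of the walk families across disjoint subtrees, that conditionally on the first-visit times of $v_1,\ldots,v_d$, the processes of frogs crossing from $v_i$ back to $\varnothing'$ are independent, each a copy of $\theta$ shifted by the corresponding first-visit time, and each such frog then takes one further nonbacktracking step from $\varnothing'$ (uniform over neighbors of $\varnothing'$ other than $v_i$) and halts — again matching \ref{i:u_i}. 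Every visit to $\varnothing$ in the real model is either from the initial frog (there is none, it went to $v_1$), from a time-$2$ frog originally at $\varnothing'$, or from a frog that emerged from some $\TT_d(v_i)$ and then stepped to $\varnothing$; and likewise on the star. Collecting visit times to $\varnothing$ versus to $\rho$ then gives $\theta\eqd\Aa\theta$.

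One subtlety to handle carefully is that in the self-similar model on $\TT_d$, frogs that reach $\varnothing$ are \emph{not} killed — the root keeps being visited — whereas \thref{lem:self.similarity}(i) is about the subtree model \emph{with} killing at the $v\to v'$ step. The point is that a frog counted in $\theta$ is one sitting at $\varnothing$; to be there it must have crossed $\varnothing'\to\varnothing$, and before that it either was one of the original $\varnothing'$-frogs or it came up out of exactly one $\TT_d(v_i)$ (it cannot have visited two of them, by nonbacktracking and the tree structure — to leave $\TT_d(v_i)$ it must pass through $\varnothing'$, and its next step is nonbacktracking, so it never returns to that $v_i$, and to enter $\TT_d(v_j)$ it would have to come back down through $\varnothing'$ again, which the halting/one-step structure inside the star model, and the self-similar killing on the tree, preclude at this level of the analysis). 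Making this "each relevant frog interacts with at most one level-$2$ subtree before reaching $\varnothing$" statement precise — and checking it is exactly the behaviour encoded by the "then halts" clauses in the $\Aa$-construction — is the main thing to get right; it is the analogue of \cite[Lemma~3.5]{JJ3_log} and I expect it to be the principal, though not deep, obstacle. Everything else is a matter of matching up the random ingredients (the $\Poi(\mu)$ count, the uniform first step of the $\varnothing'$-frogs, the uniform nonbacktracking steps, the independent subtree copies) between the two descriptions.
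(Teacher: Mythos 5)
Your proposal is correct and follows essentially the same route as the paper's proof: a direct coupling of the self-similar frog model with the star-graph particle system defining $\Aa$, matching the $\Poi(\mu)$ frogs at $\varnothing'$ and their first steps, and then using \thref{lem:self.similarity} to identify the emissions from each subtree $\TT_d(v_i)$ as independent copies of $\theta$ shifted by the activation time, exactly as in item \ref{i:u_i} of the definition of $\Aa$. One small correction to your aside: in the self-similar model the walks \emph{are} killed on arrival at $\varnothing$ at times $1$ and beyond (the visit is still recorded in $\theta$), which only makes your ``each frog interacts with at most one level-$2$ subtree before reaching $\varnothing$'' point more immediate and keeps the tree process aligned with the halting rules of the star system.
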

\begin{proof}
We couple the self-similar frog model with the particle system defining
$\Aa \theta $ as follows. Couple the number of frogs at $\rho '$ in the
particle system with the number of frogs initially at $\emptyset '$, and
couple the first step of each particle with its corresponding frog. Also
suppose without loss of generality that the initial frog moves to
$v_{1}$ after $\emptyset '$. At time~$2$, then, a frog moves to
$v_{1}$ and a particle moves to $u_{1}$. By Lemma~$
\ref{lem:self.similarity}$, if $v_{i}$ is visited, then the self-similar
frog model from this time on restricted to $\emptyset '\cup \TT _{d}(v
_{i})$ is distributed as the original frog model on $\emptyset \cup \TT
_{d}(\emptyset ')$. Thus, the visit times to $\emptyset '$ by frogs
originating within $\TT _{d}(v_{1})$ are distributed as $\theta $,
shifted forward in time by one step (the first possible visit to
$\emptyset '$ is at time~$3$, while the first possible atom of
$\theta _{n}$ is at $2$). This matches the definition of $\Aa \theta $,
and so we can couple these return times to the times of visits of
particles to $\rho '$ from $u_{1}$ in the particle system. We also
couple their next steps. We then continue in this way, coupling return
times of frogs to independent shifted copies of $\theta $ whenever one
of the sites $v_{2},\ldots ,v_{d}$ is visited. Ultimately, the return
times to $\varnothing $ in the frog model are identical to the visits
to $\rho $ in the particle system, proving that $\theta \eqd \Aa
\theta $.
\end{proof}

\begin{lemma}
\label{lem:monotone}
If $\xi \preceq \xi '$, then $\Aa \xi \preceq \Aa \xi '$.
\end{lemma}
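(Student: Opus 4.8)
The plan is to prove the lemma by constructing an explicit coupling of the two star-graph particle systems defining $\Aa\xi$ and $\Aa\xi'$ in which the realized return process of the first is contained in that of the second; by the coupling characterization of $\preceq$ this gives $\Aa\xi\preceq\Aa\xi'$.

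First fix a coupling of $\xi$ and $\xi'$ with $\xi\subseteq\xi'$ a.s., and let $(\xi^{(i)},\xi'^{(i)})$, $i=1,\dots,d$, be i.i.d.\ copies of it, so $\xi^{(i)}\subseteq\xi'^{(i)}$ for each $i$. Run the two systems together with the same $\Poi(\mu)$ count of particles at $\rho'$ and the same first move for each of them; then the two systems agree exactly through time $2$, so in particular they contribute the same points at time $2$ to their return processes and visit exactly the same subset of $\{u_1,\dots,u_d\}$ at time $2$. For each $i$, regard the atoms of $\xi^{(i)}$ as a sub-multiset of those of $\xi'^{(i)}$ (the ``matched'' particles), the remaining atoms of $\xi'^{(i)}$ giving ``extra'' particles present only in the second system. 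I would then reveal both systems in order of increasing even time $t$, first revealing the first system's moves at time $t$ and then choosing the second system's, each uniform over the allowed neighbors of $\rho'$ given the past, so as to maintain the invariant that up to time $t$ the return multiset of the first system is contained in that of the second and every $u_j$ visited in the first system has also been visited in the second at the same time.

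The delicate point, and the real obstacle, is the cascade of activations among $u_1,\dots,u_d$. Because a particle sleeping at $u_j$ is released with a delay measured from the \emph{first} visit to $u_j$, if the larger system ever visits some $u_j$ before the smaller one does, then the matched particles at $u_j$ fire earlier in the larger system and their contributions to the return process land at strictly earlier times than the corresponding contributions in the smaller system --- so those later returns of the smaller system are left uncovered. Consequently the naive coupling (copy matched moves, sample extra moves independently) fails, as one already sees for $d=2$ with $\xi=\delta_4$ and $\xi'=\delta_2+\delta_4$. The fix is to couple the nonbacktracking step of each second-system particle not independently but to the already-revealed future of the first system, allowing the cascade in the second system to reach a not-yet-visited $u_j$ at time $t$ only when $u_j$ is either never visited in the first system or is first visited there exactly at $t$; one checks that $\xi^{(i)}\subseteq\xi'^{(i)}$ together with the agreement through time $2$ leaves enough conditional probability to realize such a coupling with the correct uniform marginals. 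Once this is verified the induction closes, and the invariant at all $t$ gives $\Aa\xi\preceq\Aa\xi'$; the remaining ingredients --- synchronization through time $2$, the monotone accumulation of returns, and the initial particle's forced path to $u_1$ --- are routine.
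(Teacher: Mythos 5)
Your diagnosis of the obstacle is sharp, and it is worth saying plainly that the ``naive coupling'' you reject is exactly the proof the paper gives: it matches each atom of $\xi^{(i)}$ to an atom of $\xi'^{(i)}$ with identical paths and asserts containment of the return times, which tacitly assumes that the first-visit times of $u_1,\ldots,u_d$ agree in the two systems. As you observe, they need not agree: the extra particles in the $\Aa\xi'$ system can activate some $u_j$ strictly earlier, and then the matched particles at $u_j$ return at strictly earlier times, so their returns in the smaller system are left uncovered. So your critique applies to the paper's own argument, not just to a straw man.

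The gap is in your proposed repair, and it sits precisely at the step you wave at (``one checks that \ldots leaves enough conditional probability to realize such a coupling with the correct uniform marginals''). The law of the $\Aa\xi'$ system is fixed, and it gives positive probability to early activation of an unvisited $u_j$ no matter how the coupling is arranged; you cannot steer the extra particles away from unvisited leaves without changing the marginal. In fact your own example shows that \emph{no} containment coupling exists, so no coupling argument can close this gap: take $d=2$, $\xi=\delta_4$, $\xi'=\delta_2+\delta_4$. A return at time $10$ forces $u_2$ to be first visited at time $6$. On the event that no $\rho'$-particle steps to $u_2$ (probability $e^{-\mu/3}$; off this event a return at time $10$ is impossible in both systems), in the $\Aa\xi$ system this happens iff the single particle emerging from $u_1$ steps to $u_2$ and then $u_2$'s particle steps to $\rho$, so $\P\bigl[\Aa\xi\{10\}\geq 1\bigr]=e^{-\mu/3}/4$; in the $\Aa\xi'$ system one needs in addition that the atom-$2$ particle from $u_1$ avoid $u_2$, so $\P\bigl[\Aa\xi'\{10\}\geq 1\bigr]=e^{-\mu/3}/8$. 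Since $\{\zeta:\zeta\{10\}\geq 1\}$ is an increasing event for the containment order, the first probability exceeding the second rules out any coupling in which every point of $\Aa\xi$ lies in $\Aa\xi'$. Consequently your adaptive coupling cannot have the correct marginals, and more generally the statement cannot be rescued by a cleverer coupling: any genuine repair has to either force the activation times to coincide or replace containment by a weaker order (for instance domination of the counting functions $t\mapsto\xi([0,t])$, for which earlier points are harmless and which is what the later applications actually use).
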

\begin{proof}
Couple the number and paths of the particles initially at $\rho '$ in
the particle systems defining $\Aa \xi $ and $\Aa \xi '$. Couple
$\xi ^{(i)}$ and $\xi '^{(i)}$ for $i=1,\ldots ,d$ so that every point
in $\xi ^{(i)}$ is contained in $\xi '^{(i)}$, and couple the paths of
the corresponding particles to be identical. Now, every visit to
$\rho $ at time~$k$ in the particle system defining $\Aa \xi $ also
occurs in the particle system defining $\Aa \xi '$, demonstrating that
$\Aa \xi \preceq \Aa \xi '$.
\end{proof}

\begin{lemma}
\label{lem:theta_n}
Let $\theta $ be the return process in Theorem~$\ref{thm:strong}$. For
all $n\geq 0$, we have $\theta \succeq \Aa ^{n}0$.
\end{lemma}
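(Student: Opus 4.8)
The plan is a one-line induction on $n$, bootstrapping from the fixed-point identity $\theta\eqd\Aa\theta$ (\thref{lem:theta.fixed}) and the monotonicity of $\Aa$ (\thref{lem:monotone}). The only thing to keep straight is that stochastic dominance between point processes is a property of their distributions alone, so it interacts cleanly with the distributional identity $\theta\eqd\Aa\theta$.

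For the base case $n=0$, note $\Aa^0 0 = 0$ is the empty point process, and $\theta\succeq 0$ trivially: under any coupling at all, the (empty) set of atoms of $0$ is contained in $\theta$. For the inductive step, suppose $\theta\succeq\Aa^n 0$. Applying \thref{lem:monotone} with $\xi=\Aa^n0$ and $\xi'=\theta$ gives $\Aa\theta\succeq\Aa^{n+1}0$. Since $\Aa$ is defined on distributions and $\theta\eqd\Aa\theta$ by \thref{lem:theta.fixed}, whether a given intensity/point process is dominated by $\Aa\theta$ depends only on its law, so $\theta\succeq\Aa^{n+1}0$ as well. This closes the induction and proves $\theta\succeq\Aa^n 0$ for all $n\geq 0$.

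\textbf{Main obstacle.} There is essentially none: this lemma is the formal scaffolding for the bootstrap, and all the content is pushed into \thref{lem:theta.fixed}, \thref{lem:monotone}, and (later) the analysis of $\Aa^n 0$. The only place one must be slightly careful is not to conflate the coupling used to witness $\theta\succeq\Aa^n0$ at stage $n$ with the one at stage $n+1$ — but since one only ever applies $\Aa$ to the law of $\theta$, no consistency between the couplings is needed, and transitivity of $\preceq$ is never invoked. If one preferred, the same conclusion follows by unwinding: $\Aa^n0\preceq\Aa^n\theta\eqd\theta$, where $\Aa^n0\preceq\Aa^n\theta$ is $n$ applications of \thref{lem:monotone} to $0\preceq\theta$ and $\Aa^n\theta\eqd\theta$ is $n$ applications of \thref{lem:theta.fixed}.
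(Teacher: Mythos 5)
Your proof is correct and is essentially identical to the paper's: the same induction on $n$, with the trivial base case and the inductive step combining \thref{lem:theta.fixed} and \thref{lem:monotone} via $\theta\eqd\Aa\theta\succeq\Aa(\Aa^n0)=\Aa^{n+1}0$. Your remark that dominance depends only on laws, and the alternative unwound formulation, are fine but add nothing beyond the paper's argument.
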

\begin{proof}
We prove this by induction on $n$. Trivially, the lemma holds when
$n=0$. Now, assuming that $\theta \succeq \Aa ^{n}0$, we apply Lemmas
$\ref{lem:theta.fixed}$ and~$\ref{lem:monotone}$ to deduce that
\begin{align*}
\theta \eqd \Aa \theta \succeq \Aa (\Aa ^{n}0)
&=\Aa ^{n+1}0.\qedhere
\end{align*}
\end{proof}

If $\xi $ is a Poisson point process, we can take advantage of Poisson
thinning to give a tractable lower bound on $\Aa \xi $. Given a point
process $\xi $, let $\tau \xi $ denote the $1/d$--thinning of
$\xi $, a point process that includes each atom of $\xi $ independently
with probability~$1/d$. Let $\sigma _{t}\xi $ denote the result of
shifting each atom in $\xi $ by $t$ (for example, $\sigma _{t}(\delta
_{2}+\delta _{5})=\delta _{2+t}+\delta _{5+t}$). Given a sequence of
nonnegative numbers $(\lambda _{k})_{k\geq 1}$, define $S$ to be the
number of failures until a success in mixed Bernoulli trials where the
first trial has success probability $1- e^{-\mu /(d+1)}$, and the
$(k+1)$th has success probability $1 - e^{-\lambda _{k}/d}$ for
$k\geq 1$. Alternatively, we can write $S$ as the random variable on
$\{0,1,\ldots \}\cup \{\infty \}$ with distribution given by
%
\begin{align}
\label{eq:S.def}
\begin{split}
\P [S=0]
&= 1 - e^{-\mu /(d+1)},
\\
\P [S=k\mid S\geq k]
&= 1-e^{-\lambda _{k}/d},
\qquad
k\geq 1.
\end{split}
\end{align}
We will typically have $\lambda _{k}=0$ for $k > n$. Under this
condition, $S$ is supported on $\{0,\ldots ,n\}\cup \{\infty \}$.

\begin{proposition}
\label{lem:Aa.PPP}
Suppose that $\xi $ is a Poisson point process with intensity measure
$\sum _{i=1}^{\infty }\lambda _{i}\delta _{2i}$. Let $S^{(2)},S^{(3)},
\ldots $ be independent copies of $S$ defined above, let $\xi ^{(1)},
\xi ^{(2)},\ldots $ be independent copies of $\xi $, and let
$Z\sim \Poi \bigl (\mu /(d+1)\bigr )$ be independent of all of these. Then
%
\begin{align}
\label{eq:Aa.PPP}
\Aa \xi \succeq Z\delta _{2} + \sigma _{2}\tau \xi ^{(1)} + \sum _{i=2}
^{d}\sigma _{2+2S^{(i)}}\tau \xi ^{(i)},
\end{align}
where $\tau $ and $\sigma _{t}$ are the thinning and shift operators
defined above.
\end{proposition}
\begin{proof}
Consider the particle system defining $\Aa \xi $. Suppose that we
disallow particles starting at vertices $u_{2},\ldots ,u_{d}$ from
waking up any of sites $u_{2},\ldots ,u_{d}$. Since there are
stochastically fewer particles, the process of return times to
$\rho $ in this system is stochastically dominated by $\Aa \xi $. We now
show that these return times are distributed as the right-hand side of
\eqref{eq:Aa.PPP}.

The first term in \eqref{eq:Aa.PPP} counts the particles initially
sleeping at $\rho '$ that move to $\rho $ on step~2. The second term
counts the particles that emerge from $u_{1}$ and move to $\rho '$ and
then $\rho $. The other terms count the particles emerging from
$u_{2},\ldots ,u_{d}$ and moving to $\rho '$ and then $\rho $. We now
explain why these terms are as in \eqref{eq:Aa.PPP} and why they are
independent.

There are initially $\Poi (\mu )$ particles on vertex~$\rho '$ in the
particle system. These particles are woken on step~$1$ and then move to
random neighbors in step~$2$. This is the source of the $Z\delta _{2}$
term.

By definition of the particle system, particles from $u_{i}$ arrive at
$\rho '$ at times $k-1$ after $u_{i}$ is activated, for each atom
$k$ in $\xi ^{(i)}$. Each of these particles moves next to $\rho $ with
probability~$1/d$. Hence, $\rho $ is visited at times $\tau \xi ^{(i)}$
after activation of $u_{i}$. In particular, this explains the form of
the second term on the right-hand side of \eqref{eq:Aa.PPP}, as
$u_{1}$ is activated at time~$2$.

Last, we consider the time of activation of each of $u_{2},\ldots ,u
_{d}$. These sites can be activated either by particles initially at
$\rho '$ or by particles emerging from $u_{1}$. For $i=2,\ldots ,d$, let
$Z_{i}$ be the number of particles initially at $\rho '$ that move to
$u_{i}$. Similarly, let $\tau _{i}\xi ^{(1)}$ denote the point process
that keeps the points of $\xi ^{(1)}$ corresponding to particles that
move from $\rho '$ to $u_{i}$. By Poisson thinning, $Z,Z_{2},\ldots ,Z
_{d}$ are independent, and $\tau \xi ^{(1)},\tau _{2}\xi ^{(1)},\ldots ,
\tau _{d}\xi ^{(1)}$ are independent, and both are independent of each
other as well. This explains the independence of the terms on the
right-hand side of \eqref{eq:Aa.PPP}. The first visit to $u_{i}$ is the
first point of
\begin{align*}
Z_{i}\delta _{2} + \sigma _{2}\tau _{i}\xi ^{(1)},
\end{align*}
and it is easily seen that this is distributed as $2+2S$. As
$\rho $ is visited by particles from $u+i$ at times $\tau \xi ^{(i)}$
after activation of $u_{i}$, this explains the terms in the summand on
the right-hand side of \eqref{eq:Aa.PPP}.
\end{proof}

\subsection{Iterating $\Aa $ to prove strong recurrence}%
\label{sec:iterating}
By Lemma~$\ref{lem:theta_n}$, a lower bound on $\Aa ^{n}0$ is a lower
bound on the return process of the self-similar frog model. Thus, all
it takes to prove Theorem~$\ref{thm:strong}$ are suitable lower bounds
on $\Aa ^{n}0$. We provide them inductively, by applying Proposition~$
\ref{lem:Aa.PPP}$ and Lemma~$\ref{prop:Cox.lower}$ to extend a lower
bound on $\Aa ^{n}0$ to $\Aa ^{n+1}0$. This argument deals only with
point process; one can completely forget about the frog model for the
remainder of the section.

For a fixed choice of $\mu $ and $d$, we will define a sequence
$(\lambda _{n})_{n\geq 1}$. We then define a point process $\chi _{n}$
with intensity measure $\sum _{k=1}^{n}\lambda _{k}\delta _{2k}$, which
will serve as the lower bound for $\Aa ^{n}0$ (see Lemma~$
\ref{lem:chi_n}$). Our definition of $(\lambda _{n})$ is in terms of
another sequence $(P_{n})_{n\geq 1}$ defined by a recurrence relation.
As we will prove immediately after giving the definitions, the two
sequences $(P_{n})$ and $(\lambda _{n})$ are related by
%
\begin{align}
\label{eq:lambda.link}
P_{n} = \exp \biggl (-\frac{1}{d}\sum _{k=1}^{n}\lambda _{k}\biggr ).
\end{align}
Thus, for any $n\leq m$, we think of $\lambda _{n}$ as the intensity of
the Poisson point process $\chi _{m}$ at the point $2n$, and we think of
$P_{n}$ as the probability that a $\frac{1}{d}$-thinned version of
$\chi _{m}$ contains no points at $\{2, 4, \ldots , 2n\}$. The reason
that the definitions are given in terms of $(P_{n})$ is to set us up to
apply Lemma~$\ref{prop:Cox.lower}$.

\begin{definition}
[$\lambda _{n}$, $P_{n}$, and $\chi _{n}$]%
\label{def:sequences}
Let $a=e^{-\mu /(d+1)}$. Define
%
\begin{align}
P_{1}
&= a^{1/d},
\label{eq:P_1}%
\\
P_{2}
&= P_{1}\Bigl [ (1-a)P_{1} + a \Bigr ],
\label{eq:P_2}
\end{align}
and for all $n\geq 2$,
%
\begin{align}
\label{eq:P_n}
\begin{split}
P_{n+1}
&= P_{1}\Bigl [ (1-a)P_{n} + a\Bigl ( (1-P_{1})P_{n-1} + (P_{1}-P
_{2})P_{n-2}
+\cdots
\\
&
\qquad
\qquad
\qquad
\qquad
\qquad
\qquad
\qquad
+ (P_{n-2}-P_{n-1})P_{1} + P_{n-1}\Bigr ) \Bigr ].
\end{split}
\end{align}
Next, we define $(\lambda _{n})_{n\geq 1}$ by $\lambda _{1}=\mu /(d+1)$
and
%
\begin{align}
\label{eq:def.lambda}
\lambda _{n}
&= -d\log \biggl (\frac{P_{n}}{P_{n-1}}\biggr ),\quad n
\geq 2.
\end{align}
Finally, define $\chi _{n}$ to be a Poisson point process with intensity
measure $\sum _{k=1}^{n}\lambda _{k}\delta _{2k}$.
\end{definition}

Though we have explained how $(P_{n})$ and $(\lambda _{n})$ should be
interpreted, nothing about them is apparent from their definitions. We
start analyzing their relationship by proving \eqref{eq:lambda.link}:
it holds for $n=1$ by definition and can be extended inductively by
applying \eqref{eq:def.lambda}. Next, we consider some basic properties
of the two sequences:
%
\begin{proposition}
\label{prop:sequence_easy}
The sequence $(P_{n})_{n\geq 1}$ is nonnegative and decreasing, and the
sequence $(\lambda _{n})_{n\geq 1}$ is nonnegative.
\end{proposition}
\begin{proof}
Since $(1-a)P_{1} + a$ is a convex combination of $P_{1} < 1$ and
$1$, it is less than $1$. This shows that $P_{2}\leq P_{1}$. Using
\eqref{eq:P_2} and \eqref{eq:P_n},
\begin{align*}
P_{2}-P_{3}
&= P_{1}\biggl [ (1-a)(P_{1}-P_{2}) + a\biggl (1 - \Bigl ((1-P
_{1})P_{1} + P_{1}\Bigr )\biggr )\biggr ].
\end{align*}
\vspace*{-14pt}
\eject\noindent
Since $P_{2}\leq P_{1}$ and $(1-P_{1})P_{1}+P_{1}\leq 1$ by the same
convex combination argument, the above expression is nonnegative,
showing that $P_{3}\leq P_{2}$. Now, assume that $P_{n}\leq \cdots
\leq P_{1}$, and we will show that $P_{n+1}\leq P_{n}$. By
\eqref{eq:P_n},
\begin{align*}
P_{n}-P_{n+1}
&= P_{1}\Biggl [ (1-a)(P_{n-1}-P_{n})
\\
&
\qquad
\quad \quad
+a\biggl ( (1-P_{1})(P_{n-2}-P_{n-1}) + (P_{1}-P_{2})(P_{n-3}-P_{n-2})
+\cdots
\\
&
\qquad
\qquad
\qquad
\qquad
+ (P_{n-3}-P_{n-2})(P_{1}-P_{2}) +(P_{n-2}-P_{n-1})(1-P_{1})\biggr )
\Biggr ].
\end{align*}
The inductive hypothesis shows that all factors are nonnegative, proving
$P_{n+1}\leq P_{n}$. Hence, $(P_{n})_{n\geq 1}$ is decreasing. Thus, all
terms in \eqref{eq:P_n} are nonnegative, showing that $P_{n}\geq 0$ for
all $n$. Combined with \eqref{eq:def.lambda}, this proves that
$\lambda _{n}\geq 0$.
\end{proof}

\begin{proposition}
\label{prop:sequence_hard}
The sequence $(\lambda _{n})_{n\geq 1}$ is decreasing.
\end{proposition}
The proof of this is a lengthy and unilluminating sequence of algebraic
manipulations that we have left to Appendix~\ref{sec:painful.appendix}.
We are left with the feeling that there must be a probabilistic
explanation, but we could not come up with it.

In the next lemma, we see that $\chi _{n}$ gives us a lower bound on
$\Aa ^{n}0$ and hence on the return process of the self-similar frog
model. We will see that the definition of $(P_{n})$ is tailored to this
lemma.

\begin{lemma}
\label{lem:chi_n}
For all $n\geq 0$, it holds that $\Aa ^{n}0\succeq \chi _{n}$.
\end{lemma}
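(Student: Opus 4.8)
The plan is to prove $\Aa^n 0 \succeq \chi_n$ by induction on $n$, using \thref{lem:Aa.PPP} to pass from step $n$ to step $n+1$ and \thref{prop:Cox.lower} to verify that the resulting Cox process dominates the Poisson process $\chi_{n+1}$. The base case $n=0$ is trivial since $\chi_0$ is the empty point process. For the inductive step, suppose $\Aa^n 0 \succeq \chi_n$. By \thref{lem:monotone}, $\Aa^{n+1} 0 = \Aa(\Aa^n 0) \succeq \Aa \chi_n$, so it suffices to show $\Aa \chi_n \succeq \chi_{n+1}$. Since $\chi_n$ is a Poisson point process with intensity $\sum_{k=1}^n \lambda_k \delta_{2k}$, I would apply \thref{lem:Aa.PPP} with $\xi = \chi_n$ to obtain
\begin{align*}
  \Aa\chi_n \succeq Z\delta_2 + \sigma_2\tau\xi^{(1)} + \sum_{i=2}^d \sigma_{2+2S^{(i)}}\tau\xi^{(i)},
\end{align*}
where the $S^{(i)}$ are built from the same sequence $(\lambda_k)$ via \eqref{eq:S.def}, the $\xi^{(i)}$ are i.i.d.\ copies of $\chi_n$, and $Z\sim\Poi(\mu/(d+1))$ is independent of everything. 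Note that each $\tau\xi^{(i)}$ is, by Poisson thinning, a Poisson process with intensity $\frac1d\sum_{k=1}^n\lambda_k\delta_{2k}$; and since $\lambda_k=0$ for $k>n$, $S^{(i)}$ is supported on $\{0,\dots,n\}\cup\{\infty\}$, so each shifted copy $\sigma_{2+2S^{(i)}}\tau\xi^{(i)}$ lives on $\{2,4,\dots,2(n+1)\}\cup(\text{atoms at }\infty)$. Hence the right-hand side above is a Cox process supported on $\{2,4,\ldots,2(n+1)\}$ (ignoring atoms at $\infty$, which contribute nothing), so \thref{prop:Cox.lower} applies with $x_j = 2j$.

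The heart of the argument is then to check the conditional-vacancy hypotheses of \thref{prop:Cox.lower} for this Cox process, and this is exactly where the definition of $(P_n)$ pays off. Writing $\Xi$ for the right-hand side of the displayed bound, I would compute, for each $j \in \{1,\ldots,n+1\}$, an almost-sure upper bound $p_j$ on $\P[\Xi\{2j\}=0 \mid \Xi|_{\{2,\ldots,2(j-1)\}}]$, and show that $-\log p_j = \lambda_j$, i.e.\ that $p_j = e^{-\lambda_j}$; combined with the link \eqref{eq:lambda.link} this is equivalent to showing the appropriate ratios of the $P_j$ come out right. The cleanest way to organize this: the event $\{\Xi\{2j\}=0\}$ requires $Z=0$ (only relevant for $j=1$), that $\tau\xi^{(1)}$ has no atom at $2j-2$, and for each $i\ge2$ that the shifted thinned copy $\sigma_{2+2S^{(i)}}\tau\xi^{(i)}$ avoids $2j$ — which, conditioned on the location of the first visit $2+2S^{(i)}$ to $u_i$, happens either automatically (if $2+2S^{(i)}>2j$) or with the thinned-Poisson vacancy probability at the relevant point. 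Conditioning on $\Xi|_{\{2,\ldots,2(j-1)\}}$ only reveals information about the earlier atoms, and because the underlying processes are Poisson the conditional vacancy probabilities factor in a way that reproduces the recursion \eqref{eq:P_n}: the term $(1-a)P_n$ corresponds to the contribution from $\xi^{(1)}$ activated at time $2$, and the nested sum $a\bigl((1-P_1)P_{n-1}+\cdots\bigr)$ encodes summing over the possible activation times $2+2S^{(i)}$ of $u_i$ (weighted by the distribution \eqref{eq:S.def} of $S$, whose atoms are $P_{k-1}-P_k$-type differences) times the vacancy probability of what remains.

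I expect the main obstacle to be the bookkeeping in that conditional computation — in particular, making rigorous the claim that conditioning on $\Xi|_{\{2,\dots,2(j-1)\}}$ does not help predict whether $2j$ is occupied beyond what the upper bounds $p_j$ allow. The subtlety is that the activation time $2+2S^{(i)}$ of $u_i$ depends on the \emph{early} atoms of $\xi^{(1)}$ and on $Z_i$, which are correlated with the earlier part of $\Xi$; so one must argue that after conditioning, the worst case for the vacancy probability at $2j$ is still bounded by the unconditioned expression, using that Poisson processes have independent increments and that thinning preserves this independence. A clean route is to reveal information in the order prescribed by \thref{prop:Cox.lower} — first $\Xi\{2\}$, then $\Xi\{4\}$, and so on — and at each stage track the conditional law of each $u_i$'s activation status (not yet activated by time $2j$, versus activated at some specific earlier time), showing the conditional probability of $\Xi\{2j\}=0$ is a convex combination of terms each at most the relevant product of $P$'s, hence at most $P_j/P_{j-1} = e^{-\lambda_j/d}$ raised to the appropriate power across the $d-1$ copies plus the $\xi^{(1)}$ and $Z$ contributions, yielding overall bound $e^{-\lambda_j}$. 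Once the bounds $p_j = e^{-\lambda_j}$ are established, \thref{prop:Cox.lower} immediately gives $\Xi \succeq$ Poisson with intensity $\sum_{j=1}^{n+1}\lambda_j\delta_{2j}$, which is precisely $\chi_{n+1}$, completing the induction.
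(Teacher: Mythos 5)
Your plan is correct in outline and follows the paper's argument in all essentials --- induction via \thref{lem:monotone}, reduction through \thref{lem:Aa.PPP}, and verification of the vacancy hypotheses of \thref{prop:Cox.lower} using the recursion \eqref{eq:P_n} --- but it differs in one organizational choice: you apply \thref{prop:Cox.lower} once to the entire superposition $\Xi$, whereas the paper applies it separately to each Cox summand $\sigma_{2+2S^{(i)}}\tau\chi_n^{(i)}$, showing that each one dominates a Poisson process with intensity $\sum_{k=2}^{n+1}\tfrac{\lambda_k}{d}\delta_{2k}$, and then simply superposes these independent Poisson processes together with the already-Poisson terms $Z\delta_2$ and $\sigma_2\tau\chi_n^{(1)}$. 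The term-by-term route buys exactly what you identify as your main obstacle: each Cox summand is conditioned only on its own restriction to $\{4,\ldots,2k\}$, where the conditioning either forces $S^{(i)}\leq k-2$ (nonempty case) or reduces to the explicit empty-configuration computation giving $P_{k+1}/P_k$; there is no need to ask what the restriction of the \emph{sum} reveals about the individual terms. In your route one must pass to the finer sigma-algebra generated by all summands' restrictions, use conditional independence of the terms there, and then average back down (tower property) to obtain the almost-sure bound that \thref{prop:Cox.lower} demands --- workable, but extra bookkeeping the paper never incurs. Two inputs you use silently should be made explicit: in the nonempty-conditioning worst case, the vacancy probability of a Cox summand at $2j$ given $S^{(i)}=s$ is $e^{-\lambda_{j-1-s}/d}$, and bounding this by $e^{-\lambda_j/d}$ (equivalently, your assertion that the relevant ratio is at most $P_j/P_{j-1}$), as well as absorbing the $\sigma_2\tau\xi^{(1)}$ contribution $e^{-\lambda_{j-1}/d}$ into $e^{-\lambda_j/d}$ to reach the overall bound $e^{-\lambda_j}$, both rest on the sequence $(\lambda_k)$ being decreasing; this is \thref{prop:sequence_hard}, a nontrivial separate ingredient proved in Appendix~\ref{sec:painful.appendix}, and it is needed in either version of the argument.
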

\begin{proof}
We will show that $\Aa \chi _{n}\succeq \chi _{n+1}$ for all $n\geq 0$.
The proposition then follows by repeatedly applying Lemma~$
\ref{lem:monotone}$. By Proposition~$\ref{lem:Aa.PPP}$, it suffices to
show that
%
\begin{align}
\label{eq:to.prove}
Z\delta _{2} + \sigma _{2}\tau \chi _{n}^{(1)} + \sum _{i=2}^{d}
\sigma _{2+2S^{(i)}}\tau \chi _{n}^{(i)}\succeq \chi _{n+1}.
\end{align}
Here $Z\sim \Poi \bigl (\mu /(d+1)\bigr )$, $\chi _{n}^{(i)}$ is
distributed as $\chi _{n}$, $S^{(i)}$ is distributed according to
\eqref{eq:S.def}, and all are mutually independent. Thus, all terms on
the left-hand side of \eqref{eq:to.prove} are independent. The terms
$Z\delta _{2}$ and $\sigma _{2}\tau \chi _{n}^{(1)}$ are Poisson point
processes, while the other terms are Cox processes. The work will be in
proving the following about the Cox processes:
%
\begin{claim}
The point process $\sigma _{2+2S^{(i)}}\tau \chi ^{(i)}_{n}$
stochastically dominates a Poisson point process with intensity measure
$\sum _{k=2}^{n+1}\tfrac{\lambda _{k}}{d}\delta _{2k}$.
\end{claim}
\begin{proof}
For the sake of brevity, let $\chibar =\sigma _{2+2S^{(i)}}\tau \chi
^{(i)}_{n}$. As mentioned, $\chibar $ is a Cox process, since it is
Poisson conditional on $S$. Our goal is to apply Lemma~$
\ref{prop:Cox.lower}$ to it. With this in mind, for $2\leq k\leq n$ we
compute
\begin{align*}
\P \Bigl [ \chibar \{4,6,\ldots ,2k+2\}=0\Bigr ]
&= \sum _{s=0}^{k-1} \P
\bigl [S^{(i)}=s\bigr ]\exp \biggl (-\frac{1}{d} \sum _{i=1}^{k-s}\lambda
_{i}\biggr )
+ \P \bigl [S^{(i)}\geq k\bigr ].
\end{align*}
Expanding this using the distribution of $S^{(i)}$ from
\eqref{eq:S.def}, applying \eqref{eq:lambda.link}, and then applying
\eqref{eq:P_n},
\begin{align*}
\P \Bigl [ \chibar \{4,6,\ldots ,
&2k+2\}=0\Bigr ]
\\
&= (1-e^{-\mu /(d+1)})\exp \biggl (-\frac{1}{d} \sum _{i=1}^{k}\lambda
_{i}\biggr )
\\
&
\qquad
\qquad
+ \sum _{s=1}^{k-1}\exp \biggl (-\frac{\mu }{d+1}-\frac{1}{d}\sum _{i=1}
^{s-1}\lambda _{i}\biggr )
(1-e^{-\lambda _{s}/d})
\exp \biggl (-
\frac{1}{d} \sum _{i=1}^{k-s}\lambda _{i}\biggr )
\\
&
\qquad
\qquad
+\exp \biggl (-\frac{\mu }{d+1}-\frac{1}{d}\sum _{i=1}^{k-1}\lambda _{i}
\biggr )
\\
&= (1-a)P_{k} + a\Biggl [(1-P_{1})P_{k-1} + \sum _{s=2}^{k-1}(P_{s-1}-P
_{s})P_{k-s} + P_{k-1}\Biggr ]
\\
&=\frac{P_{k+1}}{P_{1}}.
\end{align*}
The same calculation shows that the final conclusion holds in the
$k=1$ case, and it holds trivially in the $k=0$ case. Hence, for
$1\leq k\leq n$,
%
\begin{align}
\P \Bigl [ \chibar \{2k+2\}=0 \;\Big \vert \;
\chibar \{4,\ldots ,2k\}=0
\Bigr ]
&= \frac{P_{k+1}/P_{1}}{P_{k}/P_{1}}=e^{-\lambda _{k+1}/d}.
\label{eq:Cox.worstcase}
\end{align}
We claim that this bound on the probability of $\chibar \{2k+2\}=0$
holds conditional on \emph{any} point configuration for $\chibar $ on
$\{4,\ldots ,2k\}$. That is, for all $1\leq k\leq n$,
%
\begin{align}
\label{eq:Cox.criterion}
\P \Bigl [ \chibar \{2k+2\}=0 \;\Big \vert \;
\chibar \big |_{\{4,
\ldots ,2k\}} \Bigr ]
&\leq e^{-\lambda _{k+1}/d} \text{ a.s.}
\end{align}
To prove this, suppose that $\chibar \big |_{\{4,\ldots ,2k\}}$ contains
any points. This guarantees that $S^{(i)}\leq k-2$. For $s\leq k-2$ (or
indeed even for $s\leq k-1$),
\begin{align*}
\P \Bigl [\chibar \{2k+2\}=0\;\Big \vert \;S^{(i)}=s\Bigr ]
= e^{-
\lambda _{k-s}/d} \leq e^{-\lambda _{k+1}/d},
\end{align*}
since $(\lambda _{i})_{i\geq 1}$ is decreasing by Proposition~$
\ref{prop:sequence_hard}$. This shows that conditional on $\chibar
\big |_{\{4,\ldots ,2k\}}$ being equal to any nonempty collection of
points, the probability that $\chibar \{2k+2\}=0$ is bounded by
$e^{-\lambda _{k+1}/d}$. Together with \eqref{eq:Cox.worstcase}, this
confirms \eqref{eq:Cox.criterion}. By Lemma~$\ref{prop:Cox.lower}$, this
proves the claim.
\end{proof}

To finish the proof of Lemma~$\ref{lem:chi_n}$, let $\xi ^{(1)},\xi
^{(2)},\ldots $ denote Poisson point processes with intensity measures
$\sum _{k=2}^{n+1}\tfrac{\lambda _{k}}{d}\delta _{2k}$, independent of each
other and all else. By the claim,
%
\begin{align}
\label{eq:clm.cons}
Z\delta _{2} + \sigma _{2}\tau \chi _{n}^{(1)} + \sum _{i=2}^{d}
\sigma _{2+2S^{(i)}}\tau \chi _{n}^{(i)}
\succeq Z\delta _{2} + \sigma
_{2}\tau \chi _{n}^{(1)} + \sum _{i=2}^{d}\xi ^{(i)}.
\end{align}
The right-hand side is Poisson with intensity measure
\begin{align*}
\lambda _{1}\delta _{2} + \sum _{k=2}^{n+1}\frac{\lambda _{k-1}+(d-1)
\lambda _{k}}{d}\delta _{2k}.
\end{align*}
Since $\lambda _{k-1}\geq \lambda _{k}$ by Proposition~$
\ref{prop:sequence_hard}$, the right-hand side of \eqref{eq:clm.cons}
stochastically dominates $\chi _{n+1}$. This confirms
\eqref{eq:to.prove} and completes the proof.
\end{proof}

With Lemmas $\ref{lem:theta_n}$ and~$\ref{lem:chi_n}$ established, all
that remains is to show that $\chi _{n}$ dominates a Poisson point
process with intensity measure $\sum _{k=1}^{n}\alpha \delta _{2k}$ for
some $\alpha >0$. This amounts to showing that $\lambda _{k}\geq
\alpha >0$ for all $k\geq 1$. We give a technical lemma and then the
proof.

\begin{lemma}
\label{lem:sum}
For $\beta >1$,
\begin{align*}
\sum _{k=1}^{n}\Bigl ( k (n+1-k)\Bigr )^{-\beta } \leq 2^{\beta +1}
\zeta (\beta )n^{-\beta },
\end{align*}
where $\zeta (\beta )=\sum _{k=1}^{\infty }k^{-\beta }$.
\end{lemma}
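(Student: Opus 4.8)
The plan is to split the sum $\sum_{k=1}^n\bigl(k(n+1-k)\bigr)^{-\beta}$ at the midpoint and use the symmetry $k\leftrightarrow n+1-k$. For $1\le k\le \lceil n/2\rceil$ we have $n+1-k\ge (n+1)/2\ge n/2$, so $\bigl(k(n+1-k)\bigr)^{-\beta}\le (n/2)^{-\beta}k^{-\beta}=2^\beta n^{-\beta}k^{-\beta}$. Summing this over all $k\ge 1$ gives a bound of $2^\beta n^{-\beta}\zeta(\beta)$ for the first half. The second half, $\lceil n/2\rceil< k\le n$, is handled identically after reindexing $j=n+1-k$, which runs over a subset of $\{1,\dots,\lceil n/2\rceil\}$; it contributes at most another $2^\beta n^{-\beta}\zeta(\beta)$. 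Adding the two halves yields $2^{\beta+1}\zeta(\beta)n^{-\beta}$, which is exactly the claimed bound.

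More precisely, I would write: for every $k\in\{1,\dots,n\}$, either $k\le (n+1)/2$ or $n+1-k\le (n+1)/2$ (both, in the odd middle case), and in the first case $n+1-k\ge (n+1)/2$ while in the second $k\ge (n+1)/2$. Since $(n+1)/2\ge n/2$ and both factors are positive, in either case the larger factor is at least $n/2$, so $\bigl(k(n+1-k)\bigr)^{-\beta}\le (n/2)^{-\beta}\min(k,n+1-k)^{-\beta}$. Therefore
\begin{align*}
  \sum_{k=1}^n\bigl(k(n+1-k)\bigr)^{-\beta}
    \le (n/2)^{-\beta}\sum_{k=1}^n \min(k,n+1-k)^{-\beta}.
\end{align*}
The map $k\mapsto\min(k,n+1-k)$ takes each value $m\in\{1,\dots,\lceil n/2\rceil\}$ at most twice as $k$ ranges over $\{1,\dots,n\}$, so $\sum_{k=1}^n\min(k,n+1-k)^{-\beta}\le 2\sum_{m=1}^\infty m^{-\beta}=2\zeta(\beta)$, and the bound $2^{\beta}n^{-\beta}\cdot 2\zeta(\beta)=2^{\beta+1}\zeta(\beta)n^{-\beta}$ follows.

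There is essentially no obstacle here; the only point requiring a moment's care is the double-counting bookkeeping for the values of $\min(k,n+1-k)$ (the middle term when $n$ is odd is counted once, everything else twice, so the factor $2$ is a safe overcount) and the harmless replacement of $(n+1)/2$ by $n/2$ to match the statement's $n^{-\beta}$ rather than $(n+1)^{-\beta}$. The convergence of $\zeta(\beta)$ for $\beta>1$ is what makes the right-hand side finite, and it is used only at the very last step.
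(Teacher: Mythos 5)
Your proof is correct and follows essentially the same route as the paper: exploit the symmetry $k\leftrightarrow n+1-k$, bound the larger factor from below by $n/2$, and sum the resulting $\zeta(\beta)$ series with a factor $2$ from the double counting. The paper phrases this as bounding by twice the sum over $k\leq\lceil n/2\rceil$, while you use $\min(k,n+1-k)$, but the decomposition and the constants are identical.
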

\begin{proof}
By symmetry,
\begin{align*}
\sum _{k=1}^{n}\Bigl ( k (n+1-k)\Bigr )^{-\beta }
&\leq 2\sum _{k=1}^{
\lceil n/2\rceil } \Bigl ( k (n+1-k)\Bigr )^{-\beta }
\\
&\leq 2\sum _{k=1}^{\lceil n/2\rceil }(kn/2)^{-\beta } \leq 2^{\beta
+1} \zeta (\beta )n^{-\beta }.\qedhere
\end{align*}
\end{proof}

\begin{lemma}
\label{lem:inf.lambda}
For any $\gamma >0$, if $\mu \geq (\gamma +3)d(d+1)$, then $\inf _{k}
\lambda _{k}\geq d\gamma $.
\end{lemma}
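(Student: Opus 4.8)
The plan is to analyze the recursion \eqref{eq:P_n} directly and show that if $a = e^{-\mu/(d+1)}$ is close enough to $1$ (which is what $\mu \geq (\gamma+3)d(d+1)$ gives, since then $1-a \leq \mu/(d+1) \cdot$ something, or more usefully $-\log a = \mu/(d+1) \geq (\gamma+3)d$), then the ratios $P_{n+1}/P_n$ stay bounded below $e^{-\gamma}$, equivalently $\lambda_n = -d\log(P_n/P_{n-1}) \geq d\gamma$. First I would set up a cleaner parametrization: write $P_n = \exp(-\tfrac1d \sum_{k\le n}\lambda_k)$ from \eqref{eq:lambda.link}, so the claim $\inf_k \lambda_k \geq d\gamma$ is equivalent to $P_n/P_{n-1} \leq e^{-\gamma}$ for all $n$. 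Because $(\lambda_n)$ is decreasing (\thref{prop:sequence_hard}), it is in fact enough to show $\liminf_n \lambda_n \geq d\gamma$, i.e.\ to control the asymptotic decay rate of $P_n$. So the real task is: the sequence defined by \eqref{eq:P_1}--\eqref{eq:P_n} decays at least geometrically with rate $e^{-\gamma/?}$.

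The key structural observation is that the bracketed convolution in \eqref{eq:P_n} can be rewritten by Abel summation. Writing $Q_k = P_{k-1} - P_k \geq 0$ (with $P_0 := 1$), the inner sum $(1-P_1)P_{n-1} + (P_1-P_2)P_{n-2} + \cdots + (P_{n-2}-P_{n-1})P_1 + P_{n-1}$ is $\sum_{j=1}^{n-1} Q_j P_{n-j} + P_{n-1} = \sum_{j=1}^{n} Q_j P_{n-j} $ after absorbing the last term (check indices). Thus \eqref{eq:P_n} takes the form $P_{n+1} = P_1\bigl[(1-a)P_n + a\sum_{j=1}^n Q_j P_{n-j}\bigr]$, a renewal-type recursion. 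The plan is then a guess-and-verify: conjecture $P_n \leq C r^n$ for suitable $r = r(a,d) < 1$ and constant $C$, and prove it by strong induction. Plugging the ansatz into the renewal form, one needs $P_1\bigl[(1-a)r^n + a \sum_{j\ge1} Q_j r^{n-j}\bigr] \leq r^{n+1}$; since $\sum_j Q_j = P_0 - \lim P_n \leq 1$ and $Q_j r^{-j}$ is controlled once we know $P_{n}$ decays like $r^n$ (so $Q_j \leq C' r^j$), the sum $\sum_j Q_j r^{n-j}$ is $O(n r^n)$ unless we are more careful — which is exactly where the polynomial correction in \thref{lem:sum} enters. This suggests the correct ansatz is not purely geometric but $P_n \leq C r^n n^{-\beta}$ or a comparison of the form $\lambda_n - d\gamma \geq -c \, n^{-\beta}$ summed against \thref{lem:sum}, so that the accumulated error $\sum_k (\lambda_k - d\gamma)^-$ telescopes into $P_n$ without destroying the rate.

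Concretely I would proceed as follows. (1) Prove the Abel-summation identity turning \eqref{eq:P_n} into renewal form. (2) Establish a rough a priori geometric bound $P_n \leq r_0^n$ for some $r_0 < 1$ depending on $a$ (this alone follows from a crude induction since $\sum Q_j \leq 1$ forces $P_{n+1} \leq P_1[(1-a)P_n + a \max_{j<n}P_{n-j}]$, dominated by a geometric sequence with ratio $< 1$ when $a$ close to $1$). (3) Bootstrap: using the a priori bound, show $Q_j \leq C r_0^j$, feed this back into the renewal identity, and run a refined induction with ansatz $P_n/P_{n-1} \leq e^{-\gamma}(1 + \epsilon_n)$ where $\sum_n \epsilon_n < \infty$ by \thref{lem:sum} with $\beta > 1$; the summability is what keeps the product $\prod(1+\epsilon_n)$ bounded and hence keeps $\lambda_n$ from dipping below $d\gamma$. (4) Translate back via \eqref{eq:def.lambda}. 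The main obstacle I expect is step (3): pinning down the exact relationship between $\mu \geq (\gamma+3)d(d+1)$ — i.e.\ $-\log a \geq (\gamma+3)d$ — and the decay rate, and in particular getting the constant $3$ (rather than something larger) out of the renewal inequality. The factor $P_1 = a^{1/d}$ and the split between the $(1-a)P_n$ term (the "$+3$"-ish slack) and the convolution term must be balanced tightly; the polynomial gain in \thref{lem:sum} is presumably exactly calibrated to close this with the stated constant, and verifying that calibration is the delicate part. I would not be surprised if the cleanest route is to track the generating function $\sum_n P_n z^n$ and locate its smallest singularity, but the inductive approach above should work and is more self-contained.
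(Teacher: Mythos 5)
Your core plan is the paper's proof: the paper establishes, by strong induction on $n$, exactly the polynomially-corrected geometric bound you conjecture, namely $P_k\leq e^{-\gamma(k-1)-2\log k}$, bounding the convolution in \eqref{eq:P_n} crudely by $2P_{n-1}+\sum_{i=2}^{n-1}P_{i-1}P_{n-i}$, controlling the resulting sum with \thref{lem:sum} (with $\beta=2$), and then concluding via \thref{prop:sequence_hard}: since $(\lambda_k)$ is decreasing it has a limit, and a limit below $d\gamma$ would force $\liminf_k \frac1k\log P_k>-\gamma$, contradicting the exponential bound. So your step (3), together with your opening reduction to controlling the asymptotic decay rate, is the right and intended route.

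That said, three concrete issues in your write-up. First, the orientation of $a$ is backwards: $\mu\geq(\gamma+3)d(d+1)$ makes $a=e^{-\mu/(d+1)}\leq e^{-(\gamma+3)d}$ \emph{small}, not close to $1$; the decay is driven by the prefactor $P_1=a^{1/d}\leq e^{-(\gamma+3)}$ multiplying the whole bracket, with the tiny factor $a$ suppressing the convolution term. Second, your step (2) a priori bound fails as stated: bounding the convolution by $a\max_j P_{n-j}$ only yields $P_{n+1}\leq P_1[(1-a)P_n+a]$, which stalls at the constant level $\approx P_1a/(1-P_1)$ rather than giving geometric decay; fortunately no a priori bound or bootstrap is needed, because the direct induction with the corrected ansatz closes in one pass (the inductive hypothesis applied to both factors $P_{i-1}$ and $P_{n-i}$, via $P_{i-1}-P_i\leq P_{i-1}$, is all that is used). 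Relatedly, your Abel absorption is off by a term: the bracket equals $\sum_{j=1}^{n}Q_jP_{n-j}+P_n$, not $\sum_{j=1}^{n}Q_jP_{n-j}$, though this is harmless. Third, the part you defer as ``the delicate part'' is in fact the bulk of the lemma: one must check that after inserting the ansatz, $P_{n+1}\leq e^{-\gamma n-2\log(n+1)}\,P_1\bigl(3e^{\gamma}+a(18e^{2\gamma}+211e^{3\gamma})\bigr)$, and that the hypothesis gives $P_1\bigl(3e^{\gamma}+a(18e^{2\gamma}+211e^{3\gamma})\bigr)\leq 1$; this is precisely where the constant $3$ is consumed, and your proposal as written does not verify it.
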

\begin{proof}
We will show that if $\mu \geq (\gamma +3)d(d+1)$, then
%
\begin{align}
\label{eq:exponential}
P_{k}\leq e^{-\gamma (k-1)-2\log k},
\qquad
k\geq 1.
\end{align}
This holds for $k=1$, since by definition, $P_{1}=e^{-\mu /d(d+1)}
\leq 1$. For $k=2$, using Proposition~$\ref{prop:sequence_easy}$ we have
\begin{align*}
P_{2}\leq P_{1} = e^{-\mu /d(d+1)}\leq e^{-(\gamma +3)}\leq e^{-
\gamma -2\log 2}.
\end{align*}

We now proceed inductively, assuming that \eqref{eq:exponential} holds
for $1\leq k\leq n$ for some $n\geq 2$, and showing that it holds for
$k=n+1$ as well. By definition,
\begin{align*}
P_{n+1}
&= P_{1}\Biggl [ (1-a)P_{n} + a\Biggl ( (1-P_{1})P_{n-1} +
\sum _{i=2}^{n-1} (P_{i-1} - P_{i})P_{n-i}
+ P_{n-1}\Biggr )\Biggr ]
\\
&\leq P_{1}\Biggl [ (1-a)P_{n} + a\Biggl (2P_{n-1} + \sum _{i=2}^{n-1}P
_{i-1}P_{n-i} \Biggr )\Biggr ]
\\
&\leq P_{1}\Biggl [ (1-a)e^{-\gamma (n-1)-2\log n}
\\
&
\qquad
\qquad
\qquad
+ a\Biggl ( 2e^{-\gamma (n-2)-2\log (n-1)}
+ \sum _{i=2}^{n-1}e^{-
\gamma (n-3)-2\log (i-1) -2\log (n-i)}\Biggr )\Biggr ]
\\
&=e^{-\gamma n - 2\log (n+1)}P_{1}\Biggl [(1-a)e^{\gamma }\biggl (
\frac{n+1}{n}\biggr )^{2}
\\
&
\qquad
\qquad
\qquad
\qquad
\qquad
+a\Biggl (2e^{2\gamma }\biggl (\frac{n+1}{n-1}\biggr )^{2}
+e^{3\gamma }(n+1)^{2}
\sum _{i=2}^{n-1}\Bigl ((i-1)(n-i)\Bigr )^{-2}
\Biggr )\Biggr ].
\end{align*}
For all $n\geq 2$,
\begin{align*}
(1-a)e^{\gamma }\biggl (\frac{n+1}{n}\biggr )^{2}
&\leq 3e^{\gamma },
\\
\intertext{and}
2e^{2\gamma }\biggl (\frac{n+1}{n-1}\biggr )^{2}
&\leq 18e^{2\gamma }.
\end{align*}
By Lemma~$\ref{lem:sum}$, for all $n\geq 3$,
\begin{align*}
e^{3\gamma }(n+1)^{2}\sum _{i=2}^{n-1}\Bigl ((i-1)(n-i)\Bigr )^{-2}
&
\leq 8\zeta (2)e^{3\gamma } \biggl (\frac{n+1}{n-2}\biggr )^{2}<211 e
^{3\gamma },
\end{align*}
and in the $n=2$ case the left hand side of this inequality is zero.
Thus
%
\begin{align}
\label{eq:Pn+1}
P_{n+1}
&\leq e^{-n - 2\log (n+1)}P_{1}\bigl (3e^{\gamma }+a(18e^{2
\gamma } + 211 e^{3\gamma })\bigr ).
\end{align}
Recall that $P_{1}=e^{-\mu /d(d+1)}$ and $a=e^{-\mu /(d+1)}$. Since
$\mu >(\gamma +3)d(d+1)$, we have $P_{1}< e^{-(\gamma +3)}$ and
$a< e^{-(\gamma +3)d}$. Hence, for all $d\geq 2$,
\begin{align*}
P_{1}\bigl (3e^{\gamma }+a(18e^{2\gamma } + 211 e^{3\gamma })\bigr )
&
\leq e^{-(\gamma +3)}(3e^{\gamma }+ e^{-(\gamma +3)d}(18 e^{2\gamma }
+ 211 e^{3\gamma })\bigr )
\\
&\leq 3e^{-3} + 18e^{-\gamma -9} + 211 e^{-9}\leq 1.
\end{align*}
This bound with \eqref{eq:Pn+1} completes the induction, establishing
\eqref{eq:exponential}.

By Proposition~$\ref{prop:sequence_easy}$, the sequence $\lambda _{k}$
is nonnegative, and by Proposition~$\ref{prop:sequence_hard}$, it is
decreasing. Thus it has a limit. Suppose the limit is strictly smaller
than $d\gamma $. Recalling that $\lambda _{k}=-d\log (P_{k}/P_{k-1})$,
we then have $P_{k}/P_{k-1}\geq e^{-\gamma +\epsilon }$ for all
sufficiently large $k$ and some $\epsilon >0$. Thus,
\begin{align*}
\liminf _{k\to \infty } \frac{1}{k}\log P_{k} > -\gamma .
\end{align*}
But this contradicts \eqref{eq:exponential}. Therefore $
\lim _{k\to \infty }\lambda _{k}\geq d\gamma $.
\end{proof}

\begin{proof}[Proof of Theorem~$\ref{thm:strong}$]
Let $\xi _{n}$ be a Poisson point process with intensity measure\break
$\sum _{k=1}^{n}\alpha \delta _{2k}$, for $n\in \NN \cup \{\infty \}$. By
Lemmas $\ref{lem:theta_n}$ and~$\ref{lem:chi_n}$, we have $\theta
\succeq \chi _{n}$ for all $n\in \NN $. Applying Lemma~$
\ref{lem:inf.lambda}$ with $\gamma =\alpha /d$, we have $\chi _{n}
\succeq \xi _{n}$. Thus $\theta \succeq \xi _{n}$ for all $n\in \NN $. By
\cite[Theorem~6.B.30]{SS}, which asserts the equivalence of our
definition of stochastic dominance to stochastic dominance of all
finite-dimensional distributions, we have $\theta \succeq \xi _{\infty
}$.
\end{proof}

\section{Proofs of the main results}%
\label{sec:applications}
Theorem~$\ref{thm:strong}$ and Proposition~$\ref{cor:time.change}$
combine for a quick proof of Theorem~$\ref{thm:strong.rec}$:
\begin{proof}[Proof of Theorem~$\ref{thm:strong.rec}$]
Let $U_{t}$ be the number of visits to the root by time~$t$ in the
self-similar frog model on $\TT _{d}$ with i.i.d.-$\Poi (\mu )$ initial
conditions. It follows from $\mu \geq 5d^{2}$ and $d\geq 2$ that
$\mu \geq 3d(d+1) + \frac{2}{3}(d+1)$. By Theorem~$\ref{thm:strong}$,
\begin{align*}
U_{t}\succeq \Poi \bigl (\tfrac{2}{3}\floor{t/2}\bigr ).
\end{align*}
Applying Proposition~$\ref{prop:Poi.bound}$ or even just Chebyshev's
inequality,
%
\begin{align}
\label{eq:U_t.bound}
\lim _{t\to \infty } \P [U_{t}\leq t/4]=0.
\end{align}

Now, let $V_{t}$ be the number of visits to the root by time~$t$ in the
usual frog model. Since $U_{t}$ is stochastically smaller than the
corresponding statistic in the nonbacktracking frog model on
$\TT _{d}$ (recall its definition from
Section~\ref{sec:nonbacktracking}), Proposition~$
\ref{cor:time.change}$ shows there exists $C=C(d)$ such that
$\P [V_{Ct}< U_{t}]\to 0$ as $t\to \infty $. With
\eqref{eq:U_t.bound}, this shows that
\begin{align*}
\lim _{t\to \infty }\P [V_{Ct}\leq t/4]=0.
\end{align*}
Choosing $c$ appropriately in \eqref{eq:strong.rec}, this completes the
proof.
\end{proof}

Next, we build toward Theorem~$\ref{cor:ball}$. The following lemma is
also helpful for our cover time results in \cite{HJJ_cover}.

\begin{lemma}
\label{lem:speed}
Consider the self-similar frog model on $\TT _{d}$ with i.i.d.-$
\Poi (\mu )$ frogs per site, where $\mu =(3+\beta )d(d+1)$ for arbitrary
$\beta >0$. Consider a ray $\varnothing , v_{0}, v_{1},v_{2},\ldots $
and condition the initial frog to take its first step to $v_{0}=
\varnothing '$. Let $\tau _{i}$ be the number of steps after
$v_{i-1}$ is first visited that $v_{i}$ is first visited. Then
$(\tau _{i})_{i\geq 1}$ are i.i.d.\ and satisfy
%
\begin{align}
\label{eq:speed}
\P [\tau _{i} > 2t-1]
&\leq e^{-\beta t}
\end{align}
for $t\in \{1,2,\ldots \}$.
\end{lemma}
\begin{proof}
It is a consequence of Lemma~$\ref{lem:self.similarity}$ that
$(\tau _{i})_{i\geq 1}$ are i.i.d. For the tail bound, we first observe
that $\tau _{1}=1$ if a frog at $\varnothing '$ moves immediately to
$v_{1}$. The initial frog does this with probability $1/d$, and one of
the frogs starting at $\varnothing '$ does so with probability
$1-e^{-\mu /(d+1)}$. Hence,
%
\begin{align}
\label{eq:tau1}
\P [\tau _{1} > 1]
&= \frac{d}{d+1}e^{-\mu /(d+1)}\leq e^{-\beta d}
\leq e^{-\beta },
\end{align}
which proves \eqref{eq:speed} when $t=1$. If neither of these events
occurs and $\tau _{1}>1$, then some sibling $v_{1}'$ of $v_{1}$ is
visited one step after $\varnothing '$ is visited. By Lemma~$
\ref{lem:self.similarity}$ and Theorem~$\ref{thm:strong}$, the number
of visits from $v_{1}'$ to $\varnothing '$ in the first $2t$ steps after
activation of $\varnothing '$ is stochastically larger than
$\Poi (\beta d t)$. Each of these frogs moves next to $v_{1}$ with
probability $1/d$. By Poisson thinning, the number of visits to
$v_{1}$ in the first $2t+1$ steps after activation of $\varnothing '$
is stochastically larger than $\Poi (\beta t)$. Thus,
%
\begin{align}
\label{eq:missed.bound}
\P [ \tau _{1}>2t+1\mid \tau _{1}>1]
&\leq e^{-\beta t}.
\end{align}
Equations~\eqref{eq:tau1} and \eqref{eq:missed.bound} show that
$\P [\tau _{1}>2t+1]\leq e^{-\beta (t+1)}$, proving \eqref{eq:speed} for
$t\geq 2$.
\end{proof}

This lemma shows that the time to wake up a given vertex at level~$k$
is something like the sum of $k$ geometric random variables. Thus, by
a union bound, all vertices at level~$k$ are likely to be visited in
time $O(k)$. We now make this formal to prove our main result.
\begin{proof}[Proof of Theorem~$\ref{cor:ball}$]
For applying Lemma~$\ref{lem:speed}$, we observe that our condition
$\mu \geq 5d^{2}$ implies that $\mu \geq (3+\beta )d(d+1)$ where
$\beta =1/3$. Thus, we take $\beta =1/3$ for this entire proof.

Start with a self-similar frog model on $\TT _{d}$, and let
$\emptyset '$ be the child of the root visited on the first step. We now
make a change to this process to allow frogs outside of
$\TT _{d}(\emptyset ')$ to be visited. Rather than killing all frogs at
the root at time~$1$ on, allow them to continue moving as root-biased
nonbacktracking walks (see Section~\ref{sec:nonbacktracking}),
reflecting with probability $1/d^{2}$ and moving to the other children
of the root with probability $(d+1)/d^{2}$ each. Since this process
continues to follow the rule that only a single frog is allowed to enter
any subtree, a frog at the root is still stopped if its next move is to
a previously visited child of the root. The resulting frog model is then
a stopped version of the nonbacktracking frog model on $\TT _{d}$, which
we can relate back to the usual frog model via Proposition~$
\ref{cor:time.change}$.

Consider an arbitrary path $\emptyset ,v_{0},\ldots ,v_{k-1}$ from the
root outward in $\TT _{d}$. Define $\tau _{0}$ as the time that
$v_{0}$ is first visited, and then for $i\geq 1$ define $\tau _{i}$ as
the number of steps it takes to visit $v_{i}$ after $v_{i-1}$ is first
visited. The time to visit $v_{k-1}$ is then $\tau _{0}+\cdots +\tau
_{k-1}$. The time $\tau _{0}$ does not fit the criteria of Lemma~$
\ref{lem:speed}$ exactly, because a frog that moves from a sibling of
$v_{0}$ back to $\emptyset $ moves next to $v_{0}$ with probability
$(d+1)/d^{2}$ rather than $1/d$ as in Lemma~$\ref{lem:speed}$. But this
only makes it more likely to visit $v_{0}$, and so it still satisfies
the bound that $\tau _{i}$ does in \eqref{eq:speed}, by the same
argument. Regardless of how long it takes to visit $v_{0}$, once it is
visited, the process restricted to $\TT _{d}(v_{0})$ is identical in
law. Thus, $\tau _{0}$ is independent of $\tau _{1},\ldots ,\tau _{k-1}$.

From the time that $v_{0}$ is visited, the model restricted to
$\{\emptyset \}\cup \TT _{d}(v_{0})$ with frogs stopped at $\emptyset $
is a self-similar frog model. With the annoyance of dealing with the
asymmetry of the root behind us, we apply Lemma~$\ref{lem:speed}$ to
conclude that the random variables $(\tau _{i})$ are i.i.d.\ and have
exponential tails whose decay is a fixed constant (since $\beta =1/3$).
By Proposition~$\ref{prop:exp.sum.bound}$, vertex~$v_{k-1}$ is unvisited
at time $Ck$ with probability at most $d^{-3k}$, for $C$ depending only
on $d$.

We now take a union bound over all $d^{k}$ choices of $v_{k-1}$ at
level~$k$, showing that there is an unvisited level~$k$ vertex at time
$Ck$ with probability at most $d^{-2k}$. By Proposition~$
\ref{cor:time.change}$, it holds with probability $1-d^{-2k}$ that all
vertices visited in our process by time $Ck$ are also visited in the
usual frog model on $\TT _{d}$ within time $C'k$, for a large enough
choice of $C'$ depending on $d$. Thus,
\begin{align*}
\P [ D_{C'k}\leq k] \leq 2d^{-2k}\leq d^{-k}.
\end{align*}
Setting $k=\ceil{t/C'}$ completes the proof.
\end{proof}


\appendix

\section{Excursion decomposition of random walks on trees}%
\label{sec:decompose}
The goal of this section is to prove Proposition~$
\ref{cor:time.change}$ by breaking down a random walk on a tree into a
loop-erased portion plus excursions. We carry this out first for a
random walk on $\Thom _{d}$, which denotes the $(d+1)$-homogeneous tree,
the infinite tree in which all vertices have degree~$d+1$. The
decomposition for random walks on the less symmetric tree $\TT _{d}$
will follow as a corollary. Though we do not need it for this paper, we
also work out the decomposition for walks on the finite tree
$\TT _{d}^{n}$, which we use in \cite{HJJ_cover}.

Given neighboring vertices $u,v\in \Thom _{d}$, we define a \emph{$\Thom
_{d}$-excursion} from $u$ with first step to $v$ as a random walk on
$\Thom _{d}$ defined as follows. The walker begins at $u$ and takes its
first step to $v$. On subsequent steps before returning to $u$, move
towards $u$ with probability $d/(d+1)$, and move to the $d$ other
possible neighbors each with probability $1/d(d+1)$. The
$\Thom _{d}$-excursion ends when $u$ is reached, which will occur almost
surely.

Our next proposition decomposes a simple random walk on $\Thom _{d}$
into a nonbacktracking random walk spine (recall the definition from
Section~\ref{sec:self.similar}) with independent $\Thom _{d}$-excursions
off of it. We believe this decomposition must be known in some form, but
we have not managed to find a reference to it. Given two paths
$x_{0},\ldots ,x_{a}$ and $y_{0},\ldots ,y_{b}$ such that $x_{a}=y
_{0}$, we define the concatenation of the first and second paths as
$x_{0},\ldots ,x_{a},y_{1},\ldots ,y_{b}$. Note that the concatenation
of the two walks $x_{0},\ldots ,x_{a}$ and $x_{a}$ leaves the first walk
unaffected.

\begin{proposition}
\label{prop:hom.decomposition}
Let $(X_{i})_{i\geq 0}$ be a uniform nonbacktracking random walk on
$\Thom _{d}$ from $y_{0}$. Define another walk $(Y_{i})_{i\geq 0}$ by
\begin{align*}
(Y_{i})_{i\geq 0}
&=
(X_{0},E^{0}_{1},\ldots ,E^{0}_{\ell _{0}}, X_{1},E
^{1}_{1},\ldots ,E^{1}_{\ell _{1}},X_{2},E^{2}_{1},\ldots ,E^{2}_{\ell
_{2}},\ldots ),
\end{align*}
where conditionally on $(X_{i})_{i\geq 0}$, the paths $(E_{i}^{j})_{i=1}
^{\ell _{j}}$ are independent for different $j$ and distributed as
follows:
\begin{description}%
\item[First step] Let $G\sim \Geo \bigl ((d-1)/d\bigr )$ be independent
of all else. Define $(X_{0},E^{0}_{1},\ldots ,E^{0}_{\ell _{0}})$ to be
the concatenation of $G$ independent $\Thom _{d}$-excursions from
$X_{0}$ with first step chosen uniformly from the neighbors of
$y_{0}$.
\item[Subsequent steps] Let $G\sim \Geo \bigl (d/(d+1)\bigr )$ be
independent of all else. For all $j\geq 1$, define $(X_{j},E^{j}_{1},
\ldots ,E^{j}_{\ell _{j}})$ to be the concatenation of $G$ independent
$\Thom _{d}$-excursions from $X_{j}$ with first step chosen uniformly
from the neighbors of $X_{j}$ other than $X_{j-1}$.
\end{description}
Note that $\ell _{j}=0$ if the random variable $G$ used to construct it
is zero.

Then $(Y_{i})_{i\geq 0}$ is simple random walk on $\Thom _{d}$ from
$y_{0}$.
\end{proposition}

\begin{proof}
Let $Z_{0},\ldots ,Z_{K}$ be the geodesic from $Y_{0}$ to $Y_{n}$.
Define $J\in \{0,\ldots ,K\}$ by
\begin{align*}
J = \min \bigl \{j\in \{0,\ldots ,K\} \colon Z_{j}\in \{Y_{n},Y_{n+1},
\ldots \}\bigr \}.
\end{align*}
In other words, $Z_{J}$ is the farthest toward $y_{0}$ the walk will
reach from time~$n$ onward. We claim that for any nearest-neighbor walk
$y_{0},\ldots ,y_{n}$,
%
\begin{align}
\label{eq:comb.exercise}
\P \bigl [
\text{$J=j$ and $(Y_{0},\ldots ,Y_{n})=(y_{0},\ldots ,y_{n})$}\bigr ]
&=
\begin{cases}
d^{-k}(d+1)^{-n}
& \text{if $j=0$,}
\\
(d-1)d^{-k+j-1}(d+1)^{-n}
&\text{if $1\leq j\leq k$.}
\end{cases}
\end{align}
To prove this, let $z_{0},\ldots ,z_{k}$ be the geodesic from
$y_{0}$ to $y_{n}$. Given $J=j$ and $(Y_{0},\ldots ,Y_{n})=(y_{0},
\ldots ,y_{n})$, we can classify the steps of the walk into three
categories:
\begin{description}%
\item[Permanently forward steps]\ \\
The step from $y_{i}$ to $y_{i+1}$ is \emph{permanently forward} if it
moves away from $y_{0}$, the distance from $y_{i}$ to $y_{0}$ is greater
than $j$, and the vertex $y_{i}$ does not appear again in $y_{i+1},
\ldots ,y_{n}$. This implies that $(y_{i},y_{i+1})=(z_{i'},z_{i'+1})$
for some $i'>j$ and that the walk will never revisit $y_{i}$.

A permanently forward step from $y_{i}=y_{0}$ to $y_{i+1}$ occurs with
probability $(d-1)/d(d+1)$, since the walk has probability $(d-1)/d$ of
taking a permanently forward step and probability $1/(d+1)$ of taking
this step to $y_{i+1}$. A permamently forward step from $y_{i}\neq y
_{0}$ to $y_{i+1}$ has probability $1/(d+1)$, since the walk has
probability $d/(d+1)$ of taking a permanently forward step and
probability $1/d$ of taking this step to $y_{i+1}$.
\item[Excursion-forward steps]\ \\
The step from $y_{i}$ to $y_{i+1}$ is \emph{excursion-forward} if it
moves away from $y_{0}$ and is not permanently forward (i.e., either
$y_{i}$ appears in $y_{i+1},\ldots ,y_{n}$ or the distance from
$y_{i}$ to $y_{0}$ is $j$ or less). This implies that the step is part
of an excursion.

Each excursion-forward step occurs with probability $1/d(d+1)$. To prove
this, suppose the step is from $y_{i}$ to $y_{i+1}$. If the walk has
taken an excursion-forward step prior to step~$i$ and has not yet
finished the $\Thom _{d}$-excursion, then this holds by the dynamics of
a $\Thom _{d}$-excursion. If $y_{i}=y_{0}$, then the walk has
probability $1/d$ of starting a $\Thom _{d}$-excursion and probability
$1/(d+1)$ of taking its next step to $y_{i+1}$. If $y_{i}=z_{i'}$ for
some $i'>0$ and it is not in the midst of a $\Thom _{d}$-excursion, then
the walk has probability $1/(d+1)$ of starting a $\Thom _{d}$-excursion
and probability $1/d$ of taking its next step to $y_{i+1}$.
\item[Backward steps]\ \\
The step from $y_{i}$ to $y_{i+1}$ is \emph{backward} if it is toward
$y_{0}$. Every backward step occurs in the midst of a
$\Thom _{d}$-excursion and has probability $d/(d+1)$.
\end{description}

We now use this decomposition of the walk to prove
\eqref{eq:comb.exercise}. Let $f$ and $b$ be the number of forward and
backward steps, respectively, in $y_{0},\ldots ,y_{n}$. Since
$f-b=k$ and $f+b=n$, we have $f=(n+k)/2$ and $b=(n-k)/2$. If $j=0$, then
all forward steps are excursion-forward, and
\begin{align*}
\P \bigl [
\text{$J=j$ and $(Y_{0},\ldots ,Y_{n})=(y_{0},\ldots ,y_{n})$}\bigr ]
&=
\bigl (d(d+1)\bigr )^{-f}\biggl (\frac{d}{d+1}\biggr )^{b}
\\
&=d^{-\frac{n+k}{2} + \frac{n-k}{2}}(d+1)^{-\frac{n+k}{2}-
\frac{n-k}{2}}
\\
&= d^{-k}(d+1)^{-n}.
\end{align*}
If $j>0$, then there are $j$ permanently forward steps, one of which is
from $y_{0}$. The rest of the forward steps are excursion-forward. Thus,
\begin{align*}
\P \bigl [
\text{$J\!=\!j$ and $(Y_{0},\ldots ,Y_{n})=(y_{0},\ldots ,y_{n})$}\bigr ]
&=
\frac{d-1}{d(d+1)}(d+1)^{-(j-1)}\bigl (d(d+1)\bigr )^{-(f-j)}\biggl (\!\frac{d}{d+1}\biggr )^{b}
\\
&= (d-1)d^{-1-\frac{n+k}{2}+j+\frac{n-k}{2}}(d+1)^{-\frac{n+k}{2}-
\frac{n-k}{2}}
\\
&= (d-1)d^{-k+j-1}(d+1)^{-n}.
\end{align*}
This establishes \eqref{eq:comb.exercise}. Therefore,
\begin{align*}
\P \bigl [ (Y_{0},\ldots ,Y_{n})=(y_{0},\ldots ,y_{n}) \bigr ]
&=
d^{-k}(d+1)^{-n} + \sum _{j=1}^{k}(d-1)d^{-k+j-1}(d+1)^{-n}
= (d-1)^{-n}.
\end{align*}
Thus, the law of $(Y_{0},\ldots ,Y_{n})$ is uniform over all
nearest-neighbor paths from $y_{0}$.
\end{proof}

Next, we obtain similar decompositions of random walks on the infinite
tree $\TT _{d}$ and on the finite tree $\TT _{d}^{n}$. These
decompositions are slightly more complicated as a result of the
asymmetry of the trees. First, we embed $\TT _{d}$ into $\Thom _{d}$ as
follows. Designate one vertex in $\Thom _{d}$ as the root $\emptyset $.
Specify one of its neighbors as its parent and the other $d$ as its
children. Then consider $\TT _{d}$ to consist of $\emptyset $ and all
of its descendants. We also embed $\TT _{d}^{n}$ into $\TT _{d}$ in the
obvious way.

Now, consider a walk $(X_{i})_{i\geq 0}$ in $\Thom _{d}$. We define a
new walk as follows. Delete all portions of $(X_{i})$ that lie outside
of $\TT _{d}$. This might result in the walk remaining at the root for
consecutive steps, in which case we delete all but one of these steps
to keep it a nearest-neighbor walk. We call this the \emph{restriction
of $(X_{i})_{i\geq 0}$ to $\TT _{d}$}. Completely analogously, we define
the restriction of $(X_{i})_{i\geq 0}$ to $\TT ^{n}_{d}$. Observe that
the restriction of $(X_{i})$ to $\TT _{d}$ will be a finite path if
$(X_{i})$ escapes to infinity in the direction of the parent of the root
in $\Thom _{d}$. The restriction of $(X_{i})$ to $\TT ^{n}_{d}$ is
always finite, so long as $(X_{i})$ escapes to infinity.

Given neighboring vertices $u,v\in \TT _{d}$, we define a \emph{$\TT
_{d}$-excursion from $u$ with first step to $v$} as the restriction to
$\TT _{d}$ of a $\Thom _{d}$-excursion from $u$ with first step to
$v$. Similarly, for any neighboring vertices $u,v\in \TT ^{n}_{d}$, a
\emph{$\TT ^{n}_{d}$-excursion} from $u$ with first step to $v$ is the
restriction of a $\TT _{d}$-excursion from $u$ with first step to
$v$.

When we talk of a random walk randomly stopped at a given vertex with
probability~$p$, we mean that on each visit to the vertex, the walk is
stopped with probability~$p$ independent of all else.

\begin{lemma}
\label{lem:Td.restriction}
Let $(X_{i})_{i\geq 0}$ and $(Y_{i})_{i\geq 0}$ be the walks on
$\Thom _{d}$ defined in Proposition~$\ref{prop:hom.decomposition}$. Let
$(\overline{X}_{i})_{i=0}^{S}$ and $(\overline{Y}_{i})_{i=0}^{T}$ be the
restrictions of these to $\TT _{d}$. Then the following holds:
\begin{enumerate}[label=(\roman*)]%
\item
The walk $(\overline{X}_{i})_{i=0}^{S}$ is a uniform random
nonbacktracking walk stopped at the root randomly with probability
$1/(d+1)$ at time~$0$ and with probability $1/d$ on subsequent visits
to the root.
\label{i:Td.restriction.X}
\item
The walk $(\overline{Y}_{i})_{i=0}^{T}$ is a simple random walk on
$\TT _{d}$ stopped at the root randomly with probability $(d-1)/(d
^{2}+d-1)$.
\label{i:Td.restriction.Y}
\item
When $S<\infty $,
\label{i:Td.restriction.excursions}
\begin{align*}
(\overline{Y}_{i})_{i=0}^{T}
&= (\overline{X}_{0},E_{1}^{0},\ldots ,E
_{\ell _{0}}^{0},\overline{X}_{1},
E_{1}^{1},\ldots ,E_{\ell _{1}}^{1},
\ldots , \overline{X}_{S},E_{1}^{S},\ldots ,E_{\ell _{S}}^{S}),
\end{align*}
and when $S=\infty $,
\begin{align*}
(\overline{Y}_{i})_{i=0}^{T}
&= (\overline{X}_{0},E_{1}^{0},\ldots ,E
_{\ell _{0}}^{0},\overline{X}_{1},
E_{1}^{1},\ldots ,E_{\ell _{1}}^{1},
\overline{X}_{2},\ldots ).
\end{align*}
Conditional on $(\overline{X}_{i})_{i=0}^{S}$ and on $S$, the walks
$(E^{j}_{i})_{i=1}^{\ell _{j}}$ are independent and distributed as
follows for $0\leq j \leq S$:
\begin{enumerate}[label=(\alph*)]%
\item
If $\overline{X}_{0}=\emptyset $, then $(\overline{X}_{0},E^{0}_{1},
\ldots ,E^{0}_{\ell _{0}})$ is the concatenation of $\Geo \bigl ((d^{2}-1)/(d
^{2}+d-1)\bigr )$ many independent $\TT _{d}$-excursions from
$\emptyset $ with first step uniformly chosen from the children of
$\emptyset $.%
\label{i:Td.restriction.a}
\item
If $\overline{X}_{0}\neq \emptyset $, then $(\overline{X}_{0},E^{0}
_{1},\ldots ,E^{0}_{\ell _{0}})$ is the concatenation of $\Geo \bigl ((d-1)/d
\bigr )$ many independent $\TT _{d}$-excursions from $\overline{X}_{0}$
with first step chosen uniformly from the neighbors of $\overline{X}
_{0}$.
\label{i:Td.restriction.b}
\item
If $\overline{X}_{j}=\emptyset $ for $j\geq 1$, then $(\overline{X}
_{j},E^{j}_{1},\ldots ,E^{j}_{\ell _{j}})$ is the concatenation of
$\Geo \bigl (d^{2}/(d^{2}+d-1)\bigr )$ many independent
$\TT _{d}$-excursions from $\emptyset $ with first step chosen uniformly
from the children of $\emptyset $ other than $\overline{X}_{j-1}$.%
\label{i:Td.restriction.c}
\item
If $\overline{X}_{j}\neq \emptyset $ for $j\geq 1$, then $(
\overline{X}_{j},E^{j}_{1},\ldots ,E^{j}_{\ell _{j}})$ is the
concatenation of $\Geo \bigl (d/(d+1)\bigr )$ many independent
$\TT _{d}$-excursions from $\overline{X}_{j}$ with first step chosen
uniformly from the neighbors of $\overline{X}_{j}$ other than
$\overline{X}_{j-1}$.%
\label{i:Td.restriction.d}
\end{enumerate}
\end{enumerate}
\end{lemma}
\begin{proof}
By Proposition~$\ref{prop:hom.decomposition}$, $(X_{i})_{i=0}^{\infty
}$ is a uniform nonbacktracking random walk on $\Thom _{d}$. The walk
$(\overline{X}_{i})_{i=0}^{S}$ follows the same path, except that if
$(X_{i})$ moves outside of $\Thom _{d}$, then $(\overline{X}_{i})_{i=0}
^{S}$ is stopped on the last step before it does so. This has a
$1/(d+1)$ chance of occurring on the first step if the walk starts at
the root, and it has a $1/d$ chance of occurring on subsequent visits
to the root. This proves \ref{i:Td.restriction.X}.

Similarly, $(Y_{i})_{i=0}^{\infty }$ is a simple random walk on
$\Thom _{d}$. Thus, $(\overline{Y}_{i})_{i=0}^{T}$ is a simple random
walk on $\TT _{d}$, except that it is stopped if $(Y_{i})$ is at
$\emptyset $ and will never visit any children of $\emptyset $
afterwards. (Note that it is not necessarily the case that $(
\overline{Y}_{i})$ is stopped on the last visit of $(Y_{i})$ to
$\Thom _{d}$, as one might expect.) A short calculation (done easily
with electrical networks) shows that whenever $(Y_{i})$ is at
$\emptyset $, the chance that it never visits any of the children of
$\emptyset $ is $(d-1)/(d^{2}+d-1)$, proving \ref{i:Td.restriction.Y}.

To prove \ref{i:Td.restriction.excursions}, we note that $(
\overline{X}_{j},E^{j}_{1},\ldots ,E^{j}_{\ell _{j}})$ is the
concatenation of either $\Geo \bigl ((d-1)/d\bigr )$ or $\Geo \bigl (d/(d+1)
\bigr )$ many $\Thom _{d}$-excursions with uniformly selected first step,
restricted to $\TT _{d}$. Since a $\Thom _{d}$-excursion restricted to
$\TT _{d}$ is a $\TT _{d}$-excursion, the only complication in finding
the distribution of $(\overline{X}_{j},E^{j}_{1},\ldots ,E^{j}_{\ell
_{j}})$ is that if the $\Thom _{d}$-excursion is from $\emptyset $ with
first step to the parent of $\emptyset $, then its restriction to
$\TT _{d}$ has length~$1$ and is effectively deleted. When $
\overline{X}_{j}\neq \emptyset $, this does not come up, taking care of
cases~\ref{i:Td.restriction.b} and \ref{i:Td.restriction.d}. When
$\overline{X}_{j}=\emptyset $, the number of excursions is thinned by
$d/(d+1)$ when $j=0$ or by $(d-1)/d$ when $j\geq 1$, since excursions
to the parent of $\emptyset $ are ignored. Thinning $\Geo (p)$ by
$q$ yields the distribution $\Geo \bigl (p / (p+(1-p)q)\bigr )$, and
applying this formula proves \ref{i:Td.restriction.a} and
\ref{i:Td.restriction.c}.
\end{proof}

The analogous lemma for $\TT ^{n}_{d}$-excursions is nearly identical,
and we omit its proof.

\begin{lemma}
\label{lem:Tnd.restriction}
Let $(X_{i})_{i\geq 0}$ and $(Y_{i})_{i\geq 0}$ be the walks on
$\Thom _{d}$ defined in Proposition~$\ref{prop:hom.decomposition}$. Let
$(\overline{X}_{i})_{i=0}^{S}$ and $(\overline{Y}_{i})_{i=0}^{T}$ be the
restrictions of these to $\TT ^{n}_{d}$. Then the following holds:
\begin{enumerate}[label=(\roman*)]%
\item
The walk $(\overline{X}_{i})_{i=0}^{S}$ is a uniform random
nonbacktracking walk on $\TT ^{d}_{n}$ stopped at the root randomly with
probability $1/(d+1)$ at time~$0$ and with probability $1/d$ at all
times past this, and stopped at the leaves randomly with probability
$d/(d+1)$ at time~$0$ and with probability one at all times past this.
\item
The walk $(\overline{Y}_{i})_{i=0}^{T}$ is a simple random walk on
$\TT _{d}^{n}$ stopped randomly at the root with probability
$(d-1)/(d^{2}+d-1)$ and at the leaves with probability $(d-1)/d$.
\item
We have
\begin{align*}
(\overline{Y}_{i})_{i=0}^{T}
&= (\overline{X}_{0},E_{1}^{0},\ldots ,E
_{\ell _{0}}^{0},\overline{X}_{1},
E_{1}^{1},\ldots ,E_{\ell _{1}}^{1},
\ldots , \overline{X}_{S},E_{1}^{S},\ldots ,E_{\ell _{S}}^{S}),
\end{align*}
where conditional on $(\overline{X}_{i})_{i=0}^{S}$ and on $S$, the
walks $(E^{j}_{i})_{i=1}^{\ell _{j}}$ are independent and distributed as
follows for $0\leq j \leq S$:
\begin{enumerate}[label=(\alph*)]%
\item
If $\overline{X}_{0}=\emptyset $, then $(\overline{X}_{0},E^{0}_{1},
\ldots ,E^{0}_{\ell _{0}})$ is the concatenation of $\Geo \bigl ((d^{2}-1)/(d
^{2}+d-1)\bigr )$ many independent $\TT ^{n}_{d}$-excursions from
$\emptyset $ with first step uniformly chosen from the children of
$\emptyset $.
\item
If $\overline{X}_{0}$ is a leaf of $\TT _{d}^{n}$, then $(
\overline{X}_{0},E^{0}_{1},\ldots ,E^{0}_{\ell _{0}})$ is the
concatenation of $\Geo \bigl ((d^{2}-1)/d^{2}\bigr )$ many independent
$\TT ^{n}_{d}$-excursions from $\overline{X}_{0}$ with first step to the
parent of $\overline{X}_{0}$.
\item
If $\overline{X}_{0}$ is neither the root nor a leaf, then $(
\overline{X}_{0},E^{0}_{1},\ldots ,E^{0}_{\ell _{0}})$ is the
concatenation of $\Geo \bigl ((d-1)/d\bigr )$ many independent
$\TT ^{n}_{d}$-excursions from $\overline{X}_{0}$ with first step chosen
uniformly from the neighbors of $\overline{X}_{0}$.
\item
If $\overline{X}_{j}=\emptyset $ for $j\geq 1$, then $(\overline{X}
_{j},E^{j}_{1},\ldots ,E^{j}_{\ell _{j}})$ is the concatenation of
$\Geo \bigl (d^{2}/(d^{2}+d-1)\bigr )$ many independent
$\TT ^{n}_{d}$-excursions from $\emptyset $ with first step chosen
uniformly from the children of $\emptyset $ other than $\overline{X}
_{j-1}$.
\item
For $j\geq 1$, if $\overline{X}_{j}$ is a leaf of $\TT _{d}^{n}$, then
$\ell _{j}=0$.
\item
For $j\geq 1$, if $\overline{X}_{j}$ is neither the root nor a leaf,
then $(\overline{X}_{j},E^{j}_{1},\ldots ,E^{j}_{\ell _{j}})$ is the
concatenation of $\Geo \bigl (d/(d+1)\bigr )$ many independent
$\TT ^{n}_{d}$-excursions from $\overline{X}_{j}$ with first step chosen
uniformly from the neighbors of $\overline{X}_{j}$ other than
$\overline{X}_{j-1}$.
\end{enumerate}
\end{enumerate}
\end{lemma}

We are nearly ready to give the decompositions of random walks on
$\TT _{d}$ and on $\TT ^{n}_{d}$. Recall the definitions of root-biased
nonbacktracking walks on $\TT _{d}$ and $\TT ^{n}_{d}$ from
Section~\ref{sec:nonbacktracking}.

\begin{proposition}
\label{prop:rw.infinite.decomposition}
Let $(X_{i})_{i\geq 0}$ be a root-biased nonbacktracking random walk
from $v_{0}$ on $\TT _{d}$, and define $(Y_{i})_{i\geq 0}$ on
$\TT _{d}$ by
%
\begin{align}
\label{eq:rw.infinite.decomposition}
(Y_{i})_{i\geq 0}
&=
(X_{0},E^{0}_{1},\ldots ,E^{0}_{\ell _{0}}, X_{1},E
^{1}_{1},\ldots ,E^{1}_{\ell _{1}},X_{2},E^{2}_{1},\ldots ,E^{2}_{\ell
_{2}},\ldots ),
\end{align}
where conditionally on $(X_{i})_{i\geq 0}$, the paths $(E_{i}^{j})_{i=1}
^{\ell _{j}}$ are independent for different $j$ and defined as follows:
\vspace*{-4pt}
\begin{enumerate}[label=(\roman*)]%
\item
Let $G\sim \Geo \bigl ((d-1)/d\bigr )$ be independent of all else. Define
$(E^{0}_{i})_{i=0}^{\ell _{0}}$ to be the concatenation of $G$
independent $\TT _{d}$-excursions with first step chosen uniformly from
the neighbors of $v_{0}$.
\label{i:inf.decomp.i}
\item
Let $G\sim \Geo \bigl (d/(d+1)\bigr )$ be independent of all else. For all
$j\geq 1$ where $X_{j}\neq \emptyset $, define $(E^{j}_{i})_{i=0}^{
\ell _{j}}$ to be the concatenation of $G$ independent
$\TT _{d}$-excursions with first step chosen uniformly from the
neighbors of $X_{j}$ other than $X_{j-1}$.%
\label{i:inf.decomp.ii}
\item
Let $G_{1}\sim \Geo \bigl (d^{2}/(d^{2}+d-1)\bigr )$ and $G_{2}\sim
\Geo \bigl ((d-1)/d\bigr )$ be independent of each other and all else. For
all $j\geq 1$ where $X_{j}=\emptyset $ and $X_{j-1}\neq X_{j+1}$, define
$(E^{j}_{i})_{i=0}^{\ell _{j}}$ as follows. With probability
$1/(d+1)$, let it be the concatenation of $G_{1}$ independent
$\TT _{d}$-excursions with first step chosen uniformly from the children
of the root other than $X_{j-1}$ followed by $G_{2}$ independent
$\TT _{d}$-excursions with first step chosen uniformly from all children
of the root. With probability $d/(d+1)$, let it just be the
concatenation of $G_{1}$ independent $\TT _{d}$-excursions with first
step chosen uniformly from the children of the root other than
$X_{j-1}$.
\label{i:inf.decomp.iii}
\item
Let $G_{1}\sim \Geo \bigl (d^{2}/(d^{2}+d-1)\bigr )$ and $G_{2}\sim
\Geo \bigl ((d-1)/d\bigr )$ be independent of each other and all else. For
all $j\geq 1$ where $X_{j}=\emptyset $ and $X_{j-1}= X_{j+1}$, define
$(E^{j}_{i})_{i=0}^{\ell _{j}}$ as the concatenation of $G_{1}$
independent $\TT _{d}$-excursions with first step chosen uniformly from
the children of the root other than $X_{j-1}$ followed by $G_{2}$
independent $\TT _{d}$-excursions with first step chosen uniformly from
all children of the root.%
\label{i:inf.decomp.iv}
\end{enumerate}
Then $(Y_{i})_{i\geq 0}$ is simple random walk on $\TT _{d}$ from
$v_{0}$.
\end{proposition}

\begin{proof}
For different values of $j$, let $\bigl ((\overline{X}^{(j)}_{i})_{i=0}
^{H_{j}},\,(\overline{Y}^{(j)}_{i})_{i=0}^{T_{j}}\bigr )$ be independent
copies of the walks $\bigl ((\overline{X}_{i})_{i=0}^{S},\,(
\overline{Y}_{i})_{i=0}^{T}\bigr )$ on $\TT _{d}$ from Lemma~$
\ref{lem:Td.restriction}$. Let $J$ be the smallest value such that
$H_{j}=T_{j}=\infty $. Concatenate $(\overline{X}^{(j)}_{i})_{i=0}
^{H_{j}}$ for $j=0,\ldots ,J$ to form $(X_{i})_{i\geq 0}$ and
concatenate $(\overline{Y}^{(j)}_{i})_{i=0}^{T_{j}}$ for $j=0,\ldots
,J$ to form $(Y_{i})_{i\geq 0}$. (This is a slight abuse of notation,
as $(X_{i})_{i\geq 0}$ and $(Y_{i})_{i\geq 0}$ are not the walks from
Proposition~$\ref{prop:hom.decomposition}$.)

We just need to show that these stitched together walks fit the
description of the statement of this proposition. We start by showing
that $(X_{i})$ is a root-biased nonbacktracking walk. From the
description of $(\overline{X}^{(j)}_{0})_{i\geq 0}$ as a randomly
stopped nonbacktracking walk given in Lemma~$\ref{lem:Td.restriction}$,
we can describe $(X_{i})$ as follows. Starting at $v_{0}$, it moves as
a uniform nonbacktracking random walk. If it arrives at the root from
a nonroot vertex, it is stopped with probability $1/d$. If this occurs,
it is reset; it forgets its previous step and moves to a uniform child
of the root. It might be stopped repeatedly before it manages to make
this move---that is, the underlying walks $(\overline{X}^{(j)}_{i})$ may
include several with $H_{j}=0$---but nonetheless the next step of
$(X_{i})$ after being reset in this way is to a uniform child of the
root.

From this description, it is clear that $X_{1}$ is uniform on the
neighbors of $v_{0}$, and that conditional on $(X_{0},\ldots ,X_{i})$
for $i\geq 1$, the distribution of $X_{i+1}$ is uniform on the neighbors
of $X_{i}$ except for $X_{i-1}$ whenever $X_{i}\neq \emptyset $. The
only question is as to the distribution of $X_{i+1}$ when
$X_{i}=\emptyset $. From our description, in this case, with probability
$(d-1)/d$ the distribution of $X_{i+1}$ is uniform on the children of
the root except for $X_{i-1}$, and with probability $1/d$ is uniform
over all children of the root. This mixture matches our definition of
a root-biased random walk.

The proof that $(Y_{i})$ is simple random walk is similar but simpler.
From Lemma~$\ref{lem:Td.restriction}$, the walk $(Y_{i})$ is the
concatenation of a sequence of independent randomly stopped simple
random walks, which is a simple random walk.

It now remains to describe the distribution of the excursions. From our
construction, $(Y_{i})$ has the form
\eqref{eq:rw.infinite.decomposition}, with one complication: The final
set of excursions of $(\overline{Y}_{i}^{(j)})_{i=0}^{T_{j}}$ is run
together with the first set of excursions of $(\overline{Y}_{i}^{(j+1)})_{i=0}
^{T_{j+1}}$. If $S_{j+1}=0$, then these excursions are also run together
with the first set in $(\overline{Y}_{i}^{(j+1)})_{i=0}^{T_{j+1}}$, and
so on. Regardless, this gives us a decomposition of $(Y_{i})$ as
\eqref{eq:rw.infinite.decomposition} with the paths $(E_{i}^{j})_{i=1}
^{\ell _{j}}$ conditionally independent given $(X_{i})$, and each made
up of geometrically many independent $\TT _{d}$-excursions. We just need
to show that parameters of the geometric distributions and the
distributions of their first steps match the statement of this lemma.
When $X_{j}\neq \emptyset $, the path $(E^{j}_{1},\ldots ,E^{j}_{\ell
_{j}})$ is taken from a single $(\overline{Y}_{i}^{(j)})$, and from
Lemma~\ref{lem:Td.restriction}\ref{i:Td.restriction.excursions}, the path is
distributed as given in \ref{i:inf.decomp.i} or \ref{i:inf.decomp.ii},
depending on whether $j=0$.

When $X_{j}=\emptyset $, we first consider the case that $j=0$. Then the
path $(E^{0}_{1},\ldots ,E^{0}_{\ell _{0}})$ is the concatenation of the
excursions in $(\overline{Y}^{(0)}_{i})_{i=0}^{T_{0}},\ldots ,(
\overline{Y}^{(J-1)}_{i})_{i=0}^{T_{J-1}}$ along with the first set of
excursions in $(\overline{Y}^{(J)}_{i})_{i=0}^{T_{J}}$, where $J$ is the
smallest value so that $H_{j}\geq 1$. Note that $J-1$ is then
distributed as $\Geo \bigl (d/(d+1)\bigr )$, since for each $j$, the
events $H_{j}\geq 1$ occur independently with probability $d/(d+1)$.
Each set of excursions is the concatenation of $\Geo \bigl ((d^{2}-1)/(d
^{2}+d-1)\bigr )$ many independent $\TT _{d}$-excursions with first step
chosen uniformly from the children of the root. Thus, the total number
of excursions is distributed as the sum of $1+\Geo \bigl (d/(d+1)
\bigr )$ many independent
$\Geo \bigl ((d^{2}-1)/(d^{2}+d-1)\bigr )$-distributed random variables,
which is $\Geo \bigl ((d-1)/d\bigr )$. (Here we use the general fact that
a sum of $1+\Geo (p)$ many independent $\Geo (q)$ random variables is
distributed as $\Geo \bigl (pq/(1-q+pq)\bigr )$. This confirms that
$(E^{0}_{1},\ldots ,E^{0}_{\ell _{0}})$ is distributed as
\ref{i:inf.decomp.i} when $X_{0}=\emptyset $.

If $X_{j}=\emptyset $ for $j\geq 1$ and $X_{j-1}=X_{j+1}$, then the
underlying system of stopped and restarted uniform nonbacktracking
random walks moved from $X_{j-1}$ to $X_{j}$, was stopped, and then
moved to $X_{j-1}$ when restarted. Thus, $(E^{j}_{i})_{i=1}^{\ell _{j}}$
is the concatenation of two collections of excursions: first,
$\Geo \bigl (d^{2}/(d^{2}+d-1)\bigr )$ many independent
$\TT _{d}$-excursions from $\emptyset $ with first step chosen uniformly
from the children of $\emptyset $ other than $X_{j-1}$, by
Lemma~\ref{lem:Td.restriction}\ref{i:Td.restriction.excursions}\ref{i:Td.restriction.c};
second, $1+\Geo \bigl (d/(d+1)\bigr )$ many concatenated independent
$\Geo \bigl ((d^{2}-1)/(d^{2}+d-1)\bigr )$ many $\TT _{d}$-excursions with
first step chosen uniformly from the children of the root, as in the
case $j=0$. Together, this matches the description of
\ref{i:inf.decomp.iv} from this proposition.

Last, suppose that $X_{j}=\emptyset $ for $j\geq 1$ and $X_{j-1}
\neq X_{j+1}$. Now, it is possible that the underlying system of walks
was restarted at the root or not. Conditioning on $X_{j-1}\neq X_{j+1}$
makes the probability that we were stopped $1/(d+1)$. If so, then the
path $(E^{j}_{i})_{i=1}^{\ell _{j}}$ is distributed as in
\ref{i:inf.decomp.iv}. Otherwise, it is the concatenation of
$\Geo \bigl (d^{2}/(d^{2}+d-1)\bigr )$ many independent
$\TT _{d}$-excursions from $\emptyset $ with first step chosen uniformly
from the children of $\emptyset $ other than $X_{j-1}$. This shows that
$(E^{j}_{i})_{i=1}^{\ell _{j}}$ is distributed as given in
\ref{i:inf.decomp.iii}.
\end{proof}

Last, we give the decomposition for walks on $\TT ^{n}_{d}$. The proof
is very similar to that of Proposition~$
\ref{prop:rw.infinite.decomposition}$, and we omit it.
%
\begin{proposition}
\label{prop:rw.decomposition}
Let $(X_{i})_{i\geq 0}$ be a root-biased nonbacktracking random walk
from $u_{0}$ on $\TT _{d}^{n}$, and define another path $(Y_{i})_{i
\geq 0}$ on $\TT _{d}^{n}$ by
\begin{align*}
(Y_{i})_{i\geq 0}
&=
(X_{0},E^{0}_{1},\ldots ,E^{0}_{\ell _{0}}, X_{1},E
^{1}_{1},\ldots ,E^{1}_{\ell _{1}},X_{2},E^{2}_{1},\ldots ,E^{2}_{\ell
_{2}},\ldots ),
\end{align*}
where conditionally on $(X_{i})_{i\geq 0}$, the paths $(E_{i}^{j})_{i=1}
^{\ell _{j}}$ are independent for different $j$ and defined as in Proposition~$
\ref{prop:rw.infinite.decomposition}$, except that all
$\TT _{d}$-excursions are replaced by $\TT _{d}^{n}$-excursions, and
when $X_{j}$ is a leaf, $(E^{j}_{i})_{i=1}^{\ell _{j}}$ is distributed
is the concatenation of $\Geo \bigl ((d-1)/d\bigr )$ independent
$\TT _{d}^{n}$-excursions with first step to the parent of $X_{j}$. Then
$(Y_{i})_{i\geq 0}$ is a simple random walk on $\TT ^{d}_{n}$ from
$u_{0}$.
\end{proposition}

\begin{corollary}
\label{cor:walk.coupling}
Let $(X_{i})_{i\geq 0}$ be a root-biased nonbacktracking random walk
from $x_{0}$, and let $(Y_{i})_{i\geq 0}$ be a simple random walk from
$x_{0}$, both on $\TT _{d}$ or both on $\TT ^{n}_{d}$. Suppose that they
are coupled as in Proposition~\ref{prop:rw.infinite.decomposition} or
\ref{prop:rw.decomposition}. Recall that $\ell _{j}$ is the length of the
path inserted in between $X_{j}$ and $X_{j+1}$ to form $(Y_{i})_{i
\geq 0}$. Then the random variables $(\ell _{i})_{i\geq 0}$ are
independent conditional on $(X_{i})_{i\geq 0}$, and it holds for some
absolute constant $c>0$ and all real numbers $t\geq 0$ that
%
\begin{align}
\label{eq:walk.coupling}
\P \bigl [\ell _{j}\geq t+2\bigmid (X_{i})_{i\geq 0}\bigr ]\leq e^{-ct}.
\end{align}
\end{corollary}
\begin{proof}
Let $U$ be distributed as the length of a $\Thom _{d}$-excursion, which
stochastically dominates the length of a $\TT _{d}$- or
$\TT ^{n}_{d}$-excursion. We can characterize $U$ as follows. Let
$(S_{k})_{k\geq 1}$ be a biased random walk that moves left with
probability $d/(d+1)$ and right with probability $1/(d+1)$, with
$H_{1}=1$. Then $U$ is the first time that $(S_{k})$ hits $0$, which we
note is always even. If $U\geq 2+2u$, then at least $u$ of the first
$2u$ steps are to the right, which by Proposition~$
\ref{prop:Poi.bound}$ has probability bounded by
\begin{align*}
\P \Bigl [\Bin \bigl (2u, \tfrac{1}{d+1}\bigr ) \geq u\Bigr ]
&\leq
\exp \biggl ( -\frac{(d-1)u}{(d+1)\bigl (\frac{2}{3} +\frac{4}{d-1}
\bigr )}\biggr )
\leq e^{-u/14}.
\end{align*}
Hence $U/2\preceq 1 + \Geo \bigl (1-e^{-1/14}\bigr )$.

By Proposition~\ref{prop:rw.infinite.decomposition} or
\ref{prop:rw.decomposition}, conditional on $(X_{i})_{i\geq 0}$ the path
$X_{j},E^{j}_{1},\ldots ,E^{j}_{\ell _{j}}$ is made up of stochastically
at most $\Geo \bigl ((d-1)/d\bigr )$ concatenated $\TT _{d}$- or
$\TT ^{n}_{d}$-excursions. We work conditionally on $(X_{i})_{i\geq 0}$
for the rest of the proof. The random variable $\ell _{j}$ is
stochastically dominated by twice a sum of $1+\Geo \bigl ((d-1)/d
\bigr )$ many independent random variables distributed as $1 + \Geo
\bigl (1-e^{-1/14}\bigr )$. As the sum of $1+\Geo (p)$ many independent
random variables distributed as $1+\Geo (q)$ is distributed as
$1+\Geo (pq)$,
\begin{align*}
\ell _{j} \preceq 2 + 2\Geo \biggl ( \frac{d-1}{d}\bigl (1-e^{-1/14}
\bigr ) \biggr )
\preceq 2 + 2\Geo \biggl (\frac{1-e^{-1/14}}{2}\biggr ).
\end{align*}
This shows that for all $t\geq 0$
\begin{align*}
\P \bigl [\ell _{j}\geq t+2\bigmid (X_{i})_{i\geq 0}\bigr ]
&\leq \biggl (\frac{1+e^{-1/14}}{2}\biggr )^{t/2}.
\end{align*}
Hence \eqref{eq:walk.coupling} is satisfied. The conditional
independence of $(\ell _{i})_{i\geq 0}$ follows directly from
Proposition~\ref{prop:rw.infinite.decomposition} or
\ref{prop:rw.decomposition}.
\end{proof}

\begin{proof}[Proof of Proposition~$\ref{cor:time.change}$]
Let $\Vv $ be the set of vertices visited in the nonbacktracking model
$(\eta ,S')$ by time $t$. Suppose $v\in \Vv $. This means that there
exists some sequence of frogs in $(\eta ,S')$ starting with the initial
one such that each visits the next and the last visits $v$. Consider the
path of length at most $t$ formed by pasting together the portion of
each frog's walk up until it hits the next frog or hits $v$. The same
frogs take these same steps in the model $(\eta ,S)$, but with
excursions inserted between each step. By Corollary~$
\ref{cor:walk.coupling}$, conditional on $(\eta ,S')$, these excursions
are independent and their lengths have an exponential tail. Choosing
$C_{0}$ depending on $b$ and applying Proposition~$
\ref{prop:exp.sum.bound}$, conditional on $(\eta ,S')$, the combined
length of these excursions is at most $C_{0}t$ with probability at least
$1-e^{-2bt}$. Let $C=1+C_{0}$. We have shown that if $E(v)$ is the event
that $v$ is unvisited in $(\eta ,S)$ at time $Ct$, then
\begin{align*}
\P \bigl [ E(v) \bigmid (\eta ,S') \bigr ]
&\leq e^{-2bt}
\end{align*}
for any $v$ visited in $(\eta ,S')$ by time~$t$.

Observe that $\abs{\Vv }\leq d^{t}$, since no vertex beyond level~$t$
can be visited by time~$t$. By a union bound,
\begin{align*}
\P \Biggl [ \bigcup _{v\in \Vv } E(v) \Biggmid (\eta ,S') \Biggr ]
&
\leq d^{t}e^{-2bt}\leq e^{-bt},
\end{align*}
since $b\geq \log d$. Taking expectations completes the proof.
\end{proof}

\section{Proof of Proposition~$\ref{prop:sequence_hard}$}%
\label{sec:painful.appendix}
In this appendix, we prove Proposition~$\ref{prop:sequence_hard}$.
First, we give some notation representing weighted averages that we will
use throughout. Given nonnegative weights $w_{1},\ldots ,w_{n}$, not all
zero, and quantities $a_{1},\ldots ,a_{n}$, we define
\begin{align*}
\WA (w_{1},a_{1}\sep \ldots \sep w_{n},a_{n})
&= \frac{w_{1}a_{1}+
\cdots +w_{n}a_{n}}{w_{1}+\cdots +w_{n}},
\end{align*}
the weighted average of $a_{1},\ldots ,a_{n}$ with weights proportional
to $w_{1},\ldots ,w_{n}$. The following lemma is easy to check by direct
calculation.
%
\begin{lemma}
\label{lem:reduction}
For any $0\leq k \leq n-1$,
\begin{align*}
\WA (w_{1},a_{1}\sep \ldots \sep w_{n},a_{n})
&= \WA (w_{1},a_{1}\sep
\cdots \sep w_{k},a_{k}\sep w_{*},a_{*}),
\end{align*}
where
\begin{align*}
w_{*}
&= w_{k+1}+\cdots +w_{n},
\\
a_{*}
&= \WA (w_{k+1},a_{k+1}\sep \ldots \sep w_{n},a_{n}).
\end{align*}
\end{lemma}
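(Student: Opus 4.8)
The plan is to verify the identity by expanding the definition of $\WA$ on both sides and checking that numerators and denominators agree. First I would write the left-hand side directly as $\WA(w_1,a_1\sep\ldots\sep w_n,a_n)=\bigl(\sum_{i=1}^n w_ia_i\bigr)\big/\bigl(\sum_{i=1}^n w_i\bigr)$. For the right-hand side, applying the definition to the $k+1$ quantities $a_1,\ldots,a_k,a_*$ with weights $w_1,\ldots,w_k,w_*$ gives $\WA(w_1,a_1\sep\cdots\sep w_k,a_k\sep w_*,a_*)=\bigl(\sum_{i=1}^k w_ia_i + w_*a_*\bigr)\big/\bigl(\sum_{i=1}^k w_i + w_*\bigr)$.

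The key observation is that $w_*a_*=\sum_{i=k+1}^n w_ia_i$: multiplying the definition $a_*=\bigl(\sum_{i=k+1}^n w_ia_i\bigr)\big/\bigl(\sum_{i=k+1}^n w_i\bigr)$ by $w_*=\sum_{i=k+1}^n w_i$ clears the denominator. Substituting this and $w_*=\sum_{i=k+1}^n w_i$ into the right-hand side turns its numerator into $\sum_{i=1}^k w_ia_i+\sum_{i=k+1}^n w_ia_i=\sum_{i=1}^n w_ia_i$ and its denominator into $\sum_{i=1}^n w_i$, which matches the left-hand side exactly. This is the entire computation.

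The only point needing a word of care — and it is the ``main obstacle,'' which is to say there is essentially none — is the degenerate case $w_{k+1}=\cdots=w_n=0$, in which $w_*=0$ and $a_*$ is not defined, since a $\WA$ requires weights that are not all zero. In that situation one reads the term $w_*a_*$ as $0$, and both sides collapse to $\WA(w_1,a_1\sep\cdots\sep w_k,a_k)$, so the identity still holds; in all uses of this lemma in the appendix the relevant partial weight sums are strictly positive, so this case does not actually arise. The lemma serves purely as a bookkeeping device that lets us fold a contiguous block of a weighted average into a single weighted term, a move we will make repeatedly in the manipulations to come.
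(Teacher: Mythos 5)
Your computation is exactly the direct calculation the paper invokes (it states the lemma is "easy to check by direct calculation" and gives no further detail), so your proof is correct and matches the paper's approach; the remark on the degenerate case $w_*=0$ is a harmless extra precaution.
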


Let $p_{n}=e^{-\lambda _{n}/d}$. Proving that $(\lambda _{n})_{n\geq 1}$
is decreasing is equivalent to proving that $(p_{n})_{n\geq 1}$ is
increasing, which is what we will do. Also let $p_{0}=a$, which is
defined in Definition~$\ref{def:sequences}$ as $a=e^{-\mu /(d+1)}$.
Equation~\eqref{eq:lambda.link} tells us that $P_{n}=p_{1}\cdots p
_{n}$. Also define $P_{0}=1$. We can then recast \eqref{eq:P_n} as
\begin{align*}
P_{n+1}
&= p_{0}^{1/d}\Biggl [\sum _{i=0}^{n-1} (p_{0}\cdots p_{i-1})(1-p
_{i})(p_{1}\cdots p_{n-i}) + p_{0}\cdots p_{n-1}
\Biggr ]
\end{align*}
for $n\geq 2$. Since $P_{n+1}/P_{n}=p_{n+1}$, we obtain
%
\begin{align}
\label{eq:p_n}
p_{n+1}
&= \frac{\sum _{i=0}^{n-1} (p_{0}\cdots p_{i-1})(1-p_{i})(p
_{1}\cdots p_{n-i}) + p_{0}\cdots p_{n-1}}{\sum _{i=0}^{n-2} (p_{0}
\cdots p_{i-1})(1-p_{i})(p_{1}\cdots p_{n-1-i}) + p_{0}\cdots p_{n-2}}
\end{align}
for $n\geq 2$.
%
\begin{lemma}
\label{lem:suf1}
Define
\begin{align*}
t_{i}
&= (P_{i-1}-P_{i})P_{n-1-i},
\quad \quad
1\leq i\leq n-2,
\\
t_{n-1}
&= P_{n-2},
\\
q
&= (1-p_{n-1})p_{1} + p_{n-1} = p_{1} + (1-p_{1})p_{n-1}.
\end{align*}
For $n\geq 2$, if
%
\begin{align}
\label{eq:suf1}
p_{n}
&\leq \WA (t_{1}, p_{n-1}\sep t_{2}, p_{n-2}\sep \ldots \sep t
_{n-2}, p_{2}\sep t_{n-1}, q),
\end{align}
then $p_{n}\leq p_{n+1}$.
\end{lemma}
\begin{proof}
Expanding out the right-hand side of \eqref{eq:suf1} gives our
assumption as
\begin{align*}
p_{n}
&\leq \frac{\sum _{i=1}^{n-1} (P_{i-1}-P_{i})P_{n-i} + P_{n-1}}{
\sum _{i=1}^{n-2} (P_{i-1}-P_{i})P_{n-1-i} + P_{n-2}}
\\
&= \frac{p_{0}\Bigl [\sum _{i=1}^{n-1} (P_{i-1}-P_{i})P_{n-i} + P_{n-1}
\Bigr ]}{p_{0}\Bigl [\sum _{i=1}^{n-2} (P_{i-1}-P_{i})P_{n-1-i} + P_{n-2}
\Bigr ]}.
\end{align*}
For any positive $A,B,C,D$, if $x\leq B/C$, then $x \leq (xA + B) /
(A+C)$. Applying this with $x=p_{n}$, we have
\begin{align*}
p_{n}
&\leq \frac{p_{n}(1-p_{0})P_{n-1}+p_{0}\Bigl [\sum _{i=1}^{n-1} (P
_{i-1}-P_{i})P_{n-i} + P_{n-1}\Bigr ]}{(1-p_{0})P_{n-1}+p_{0}\Bigl [
\sum _{i=1}^{n-2} (P_{i-1}-P_{i})P_{n-1-i} + P_{n-2}\Bigr ]}
\\
&=\frac{P_{n+1}}{P_{n}} = p_{n+1}.\qedhere
\end{align*}
\end{proof}

Our goal now is to show that \eqref{eq:suf1} holds. In
\eqref{eq:two.observations.1}, we will express $p_{n}$ as a weighted
average. Using Lemma~$\ref{lem:i.i+1}$, we then change this expression
one term at a time to create a chain of inequalities that ends with
$p_{n+1}$. We now define expressions for forming this chain.

\begin{definition}
For fixed $n$, we define functions $u_{i}(x_{1},\ldots ,x_{n-1})$ and
related quantities $u_{i}^{j}$. Let
\begin{align*}
u_{i}(x_{1},\ldots ,x_{n-2})
&= x_{1}\cdots x_{i-1}(1-x_{i}) P_{n-1-i},
\quad \quad
1\leq i\leq n-2,
\\
u_{n-1}(x_{1},\ldots ,x_{n-2})
&= x_{1}\dots x_{n-2}.
\end{align*}
For all $1\leq i \leq n-1$, let
\begin{align*}
u_{i}^{0}
&= u_{i}(p_{0},p_{1},p_{2},\ldots ,p_{n-3}),
\\
u_{i}^{1}
&= u_{i}(p_{1},p_{1},p_{2},\ldots ,p_{n-3}),
\\
u_{i}^{j}
&= u_{i}( p_{1} , \hdots , p_{j-1}, p_{j}, p_{j}, p_{j+1},
\hdots , p_{n-3} )
\qquad
\text{ for } 1 < j < n-2,
\\
u_{i}^{n-2}
&= u_{i}(p_{1},p_{2},p_{3},\ldots ,p_{n-2}).
\end{align*}
\end{definition}

\begin{lemma}
\label{lem:two.observations}
Let $q'=p_{1} +(1-p_{1})p_{n-2}$, and recall the definition of $q$ from
Lemma~$\ref{lem:suf1}$. We have
%
\begin{align}
\label{eq:two.observations.1}
p_{n}
&= \WA ( u_{1}^{0},p_{n-1}\sep u_{2}^{0},p_{n-2}\sep \ldots \sep u
_{n-2}^{0},p_{2}\sep u_{n-1}^{0},q' ),
\end{align}
and
%
\begin{align}
\label{eq:two.observations.2}
\begin{split}
&\WA (t_{1}, p_{n-1}\sep t_{2}, p_{n-2}\sep \ldots \sep t_{n-2}, p
_{2}\sep t_{n-1}, q)
\\
&
\qquad
\qquad
\qquad
\qquad
=
\WA (u_{1}^{n-2},p_{n-1}\sep u_{2}^{n-2},p_{n-2}\sep \ldots \sep u
_{n-2}^{n-2},p_{2}\sep u_{n-1}^{n-2},q).
\end{split}
\end{align}
Thus, proving \eqref{eq:suf1} is equivalent to proving that
%
\begin{align}
\label{eq:suf2}
\begin{split}
&\WA ( u_{1}^{0},p_{n-1}\sep u_{2}^{0},p_{n-2}\sep \ldots \sep u_{n-2}
^{0},p_{2}\sep u_{n-1}^{0},q' )
\\
&
\qquad
\qquad
\qquad
\qquad
\leq \WA (u_{1}^{n-2},p_{n-1}\sep u_{2}^{n-2},p_{n-2}\sep \ldots \sep u
_{n-2}^{n-2},p_{2}\sep u_{n-1}^{n-2},q),
\end{split}
\end{align}
\end{lemma}
\begin{proof}
For \eqref{eq:two.observations.1}, observe that from \eqref{eq:p_n},
\begin{align*}
p_{n}
&= \frac{\sum _{i=1}^{n-1} (p_{0}\cdots p_{i-2})(1-p_{i-1})(p
_{1}\cdots p_{n-i}) + p_{0}\cdots p_{n-2}}{\sum _{i=1}^{n-2} (p_{0}
\cdots p_{i-2})(1-p_{i-1})(p_{1}\cdots p_{n-1-i}) + p_{0}\cdots p_{n-3}}
\\
&= \frac{\sum _{i=1}^{n-2} u_{i}^{0}p_{n-i} +u_{n-1}^{0}\bigl (p_{1} +(1-p
_{1})p_{n-2}\bigr )}{\sum _{i=1}^{n-2} u_{i}^{0} + u_{n-1}^{0}}
\\
&= \WA ( u_{1}^{0},p_{n-1}\sep u_{2}^{0},p_{n-2}\sep \ldots \sep u
_{n-2}^{0},p_{2}\sep u_{n-1}^{0},q' ),
\end{align*}
For \eqref{eq:two.observations.2}, note that $u_{i}^{n-2}=t_{i}$.
\end{proof}

\begin{lemma}
\label{lem:i.i+1}
Fix $n\geq 2$, and assume that $0<p_{0}\leq \cdots \leq p_{n}$. For
$0\leq i\leq n-3$,
%
\begin{align}
\label{eq:i.i+1}
\begin{split}
&\WA ( u_{1}^{i},p_{n-1}\sep u_{2}^{i},p_{n-2}\sep \cdots \sep u_{n-2}
^{i},p_{2}\sep u_{n-1}^{i},q)
\\
&
\qquad
\qquad
\qquad
\qquad
\leq \WA ( u_{1}^{i+1},p_{n-1}\sep u_{2}^{i+1},p_{n-2}\sep \cdots \sep u
_{n-2}^{i+1},p_{2}\sep u_{n-1}^{i+1},q).
\end{split}
\end{align}
\end{lemma}

\begin{proof}
We prove this by induction on $i$. Assume that \eqref{eq:i.i+1} holds
with $i$ replaced by $j$, whenever $0\leq j< i$. Now, our goal is to
show that it holds for $i$. Noting that $u^{i}_{i+1}=(1-p_{i})P_{i}P
_{n-i-2}$, we apply Lemma~$\ref{lem:reduction}$ to obtain
%
\begin{align}
\label{eq:reduced}
\begin{split}
&\WA ( u_{1}^{i},p_{n-1}\sep u_{2}^{i},p_{n-2}\sep \cdots \sep u_{n-2}
^{i},p_{2}\sep u_{n-1}^{i},q)
\\
&
\qquad
\qquad
\qquad
= \WA ( u_{1}^{i},p_{n-1}\sep u_{2}^{i},p_{n-2}\sep \cdots \sep u^{i}
_{i},p_{n-i}\sep (1-p_{i})P_{i}P_{n-i-2},\,p_{n-i-1}\sep p_{i}u_{*}, p
_{*}),
\end{split}
\end{align}
where
%
\begin{align}
u_{*}
&= \frac{1}{p_{i}}\sum _{k=i+2}^{n-1} u^{i}_{k} =
\sum _{k=i+2}
^{n-2}P_{k-2}(1-p_{k-1})P_{n-1-k} + P_{n-3},
\\
p_{*}
&= \WA ( u_{i+2}^{i},p_{n-i-2}\sep \cdots \sep u_{n-2}^{i},p
_{2}\sep u_{n-1}^{i},q ).
\label{eq:p*.def}
\end{align}
We claim that $p_{*} \geq p_{n-1}$. Indeed, suppose that $p_{*} < p
_{n-1}$. As we are given that $p_{n-i-1}\leq \cdots \leq p_{n-1}$, the
right hand side of \eqref{eq:reduced} is a weighted average of terms of
size at most $p_{n-1}$, with the last one, $p_{*}$, strictly smaller
than $p_{n-1}$. As the weight on $p_{*}$ is strictly positive, this
implies that \eqref{eq:reduced} is strictly smaller than $p_{n-1}$.
Recall that $q' = p_{1} + (1-p_{1})p_{n-2}$ and $q=p_{1} + (1-p_{1})p
_{n-1}$. Since $p_{n-2}\leq p_{n-1}$ by assumption, we have
$q'\leq q$. Using this, the assumption that $p_{n-1}\leq p_{n}$, and
\eqref{eq:two.observations.1},
\begin{align*}
p_{n-1}\leq p_{n}
&= \WA ( u_{1}^{0},p_{n-1}\sep u_{2}^{0},p_{n-2}\sep
\ldots \sep u_{n-2}^{0},p_{2}\sep u_{n-1}^{0},q' )
\\
&\leq \WA ( u_{1}^{0},p_{n-1}\sep u_{2}^{0},p_{n-2}\sep \ldots \sep u
_{n-2}^{0},p_{2}\sep u_{n-1}^{0},q ).
\end{align*}
Applying our inductive hypothesis, we obtain
\begin{align*}
p_{n-1}
&\leq \WA ( u_{1}^{i},p_{n-1}\sep u_{2}^{i},p_{n-2}\sep \ldots
\sep u_{n-2}^{i},p_{2}\sep u_{n-1}^{i},q ).
\end{align*}
But this is a contradiction, since \eqref{eq:reduced} is bounded from
below by $p_{n-1}$. Hence $p_{*}\geq p_{n-1}$.

Thus, the right-hand side of \eqref{eq:reduced} is a weighted sum of
terms of which $p_{*}$ and $p_{n-i-1}$ are respectively the largest and
smallest. Increasing the weight on the largest term and decreasing it
on the smallest term can only make the sum larger. Since $p_{i}\leq p
_{i+1}$,
%
\begin{align}
\label{eq:sq2}
\begin{split}
&\WA ( u_{1}^{i},p_{n-1}\sep u_{2}^{i},p_{n-2}\sep \cdots \sep u_{i}
^{i},p_{n-i}\sep (1-p_{i})P_{i}P_{n-i-2},p_{n-i-1}\sep p_{i}u_{*}, p
_{*})
\\
&
\qquad
\qquad
\leq \WA ( u_{1}^{i},p_{n-1}\sep u_{2}^{i},p_{n-2}\sep \cdots \sep u
_{i}^{i},p_{n-i}\sep (1-p_{i+1})P_{i}P_{n-i-2},p_{n-i-1}\sep p_{i+1}u
_{*}, p_{*}).
\end{split}
\end{align}
Now, we note that
\begin{align*}
u_{k}^{i}
&=u_{k}^{i+1} \text{ for $1\leq k\leq i$,}
\\
u_{i+1}^{i+1}
&=(1-p_{i+1})P_{i}P_{n-i-2},
\\
\intertext{and}
p_{i+1}u_{*}
&= \sum _{k=i+2}^{n-1} u_{k}^{i+1}.
\end{align*}
Hence,
%
\begin{align}
\label{eq:sq3}
\begin{split}
&\WA ( u_{1}^{i},p_{n-1}\sep u_{2}^{i},p_{n-2}\sep \cdots \sep u_{i}
^{i},p_{n-i}\sep (1-p_{i+1})P_{i}P_{n-i-2},p_{n-i-1}\sep p_{i+1}u_{*},
p_{*})
\\
&
\qquad
\qquad
=
\WA ( u_{1}^{i+1},p_{n-1}\sep u_{2}^{i+1},p_{n-2}\sep \cdots \sep u
_{i}^{i+1},p_{n-i}\sep u_{i+1}^{i+1},p_{n-i-1}\sep p_{i+1}u_{*}, p
_{*}).
\end{split}
\end{align}
Recall the definition of $p_{*}$ in \eqref{eq:p*.def}, and observe that
$u_{k}^{i+1}=p_{i+1}u_{k}^{i}/p_{i}$ for $k\geq n+2$. Since rescaling
all weights by the same factor does not change the weighted sum,
\begin{align*}
p_{*}
&= \WA \biggl (\frac{p_{i+1}}{p_{i}} u_{i+2}^{i},\,p_{n-i-2}\biggsep
\cdots \biggsep
       \frac{p_{i+1}}{p_{i}}u_{n-2}^{i},\,p_{2}\biggsep
       \frac{p_{i+1}}{p_{i}}u_{n-1}^{i},\,q \biggr )
\\
&= \WA \bigl ( u_{i+2}^{i+1},\,p_{n-i-2}\bigsep \cdots \bigsep u_{n-2}
^{i+1},\,p_{2}\bigsep u_{n-1}^{i+1},\,q \bigr ).
\end{align*}
Applying Lemma~$\ref{lem:reduction}$ in reverse,
%
\begin{align}
\label{eq:sq4}
\begin{split}
&\WA ( u_{1}^{i+1},p_{n-1}\sep u_{2}^{i+1},p_{n-2}\sep \cdots \sep u
_{i}^{i+1},p_{n-i}\sep u_{i+1}^{i+1},p_{n-i-1}\sep p_{i+1}u_{*}, p
_{*})
\\
&
\qquad
\qquad
= \WA (u_{1}^{i+1}, p_{n-1} \sep u_{2}^{i+1},p_{n-2}\sep \cdots \sep
      u_{n-2}^{i+1}, p_{2}\sep u_{n-1}^{i+1}, q).
\end{split}
\end{align}
Together, \eqref{eq:reduced}, \eqref{eq:sq2}, \eqref{eq:sq3}, and
\eqref{eq:sq4} show that
\begin{align*}
&\WA ( u_{1}^{i},p_{n-1}\sep u_{2}^{i},p_{n-2}\sep \cdots \sep u_{n-2}
^{i},p_{2}\sep u_{n-1}^{i},q)
\\
&
\qquad
\qquad
\qquad
\leq \WA (u_{1}^{i+1}, p_{n-1} \sep u_{2}^{i+1},p_{n-2}\sep \cdots \sep
      u_{n-2}^{i+1}, p_{2}\sep u_{n-1}^{i+1}, q).
\end{align*}
This completes the induction, proving that \eqref{eq:i.i+1} holds for
all $0\leq i\leq n-3$.
\end{proof}

\begin{proof}[Proof of Proposition~$\ref{prop:sequence_hard}$]
We prove that $p_{0}\leq p_{1}\leq \cdots $ by induction. We have
\begin{align*}
p_{0} = e^{-\mu /(d+1)} \leq e^{-\mu /d(d+1)} = p_{1}.
\end{align*}
From \eqref{eq:P_2},
\begin{align*}
p_{2}
&= \frac{P_{2}}{P_{1}} = (1-p_{0})p_{1} + p_{0}\geq p_{1}.
\end{align*}

Now, suppose that $p_{0}\leq \cdots \leq p_{n}$ for some $n\geq 2$.
Recall that $q' = p_{1} + (1-p_{1})p_{n-2}$ and $q=p_{1} + (1-p_{1})p
_{n-1}$, and hence $q'\leq q$. Therefore,
\begin{align*}
&\WA ( u_{1}^{0},p_{n-1}\sep u_{2}^{0},p_{n-2}\sep \ldots \sep u_{n-2}
^{0},p_{2}\sep u_{n-1}^{0},q' )
\\
&
\qquad
\qquad
\qquad
\qquad
\leq \WA ( u_{1}^{0},p_{n-1}\sep u_{2}^{0},p_{n-2}\sep \ldots \sep u
_{n-2}^{0},p_{2}\sep u_{n-1}^{0},q).
\end{align*}
By Lemma~$\ref{lem:i.i+1}$,
\begin{align*}
&\WA ( u_{1}^{0},p_{n-1}\sep u_{2}^{0},p_{n-2}\sep \ldots \sep u_{n-2}
^{0},p_{2}\sep u_{n-1}^{0},q )
\\
&
\qquad
\qquad
\qquad
\qquad
\leq \WA ( u_{1}^{n-2},p_{n-1}\sep u_{2}^{n-2},p_{n-2}\sep \ldots \sep u
_{n-2}^{n-2},p_{2}\sep u_{n-1}^{n-2},q ).
\end{align*}
This proves \eqref{eq:suf2}, which by Lemma~$
\ref{lem:two.observations}$ is equivalent to proving \eqref{eq:suf1}.
By Lemma~$\ref{lem:suf1}$, this shows that $p_{n}\leq p_{n+1}$,
completing the induction to prove that $(p_{n})_{n\geq 0}$ is
increasing, which is equivalent to $(\lambda _{n})_{n\geq 1}$ being
decreasing.
\end{proof}

\section{Miscellaneous concentration inequalities}%
\label{sec:concentration}
%
\begin{proposition}
\label{prop:Poi.bound}
Let $\E Y=\lambda $, and suppose either that $Y$ is Poisson or that
$Y$ is a sum of independent random variables supported on $[0,1]$. For
any $0<\alpha <1$,
\begin{align*}
\P [Y\leq \alpha \lambda ]
&\leq \exp \biggl (-\frac{(1-\alpha )^{2}
\lambda }{2}\biggr ),
\end{align*}
and for any $\alpha >1$,
\begin{align*}
\P [Y\geq \alpha \lambda ]
&\leq \exp \Biggl (-\frac{(\alpha -1)\lambda
}{\frac{2}{3} + \frac{2}{\alpha -1}}\Biggr ).
\end{align*}
\end{proposition}
\begin{proof}
These inequalities are well known consequences of the
Cram\'{e}r--Chernoff method of bounding the moment generating function
and applying Markov's inequality (see \cite[Section~2.2]{BLM}),
though it is difficult to find these exact bounds in the literature. For
a convenient statement of these inequalities obtained by other means,
apply \cite[Theorem~3.3]{CGJ} with $c=p=1$.
\end{proof}

\begin{proposition}
\label{prop:exp.sum.bound}
Let $(X_{i})_{i=1}^{n}$ be a collection of independent random variables
satisfying
\begin{align*}
\P [X_{i} \geq \ell ] \leq Ce^{-b\ell }
\end{align*}
for some $C$ and $b>0$ and all $\ell \geq 1$. Then for any $b'>0$, there
exists $C'$ depending on $C$, $b$, and $b'$ such that
\begin{align*}
\P \Biggl [\sum _{i=1}^{n} X_{i} \geq C'n \Biggr ]
&\leq e^{-b'n}.
\end{align*}
We can take $C'=2(b'+C)/b$.
\end{proposition}
\begin{proof}
This is another consequence of the Cram\'{e}r--Chernoff method. Observe
that
\begin{align*}
\E e^{sX_{i}}
&= \int _{0}^{\infty } \P [e^{sX_{i}}\geq t]\,dt
\\
&\leq 1 + \int _{1}^{\infty }\P [X_{i}\geq \log t/s]\,dt
\\
&\leq 1 + \int _{1}^{\infty } C t^{-b/s}\,dt = 1 + \frac{Cs}{b-s}
\end{align*}
for any $0\leq s<b$. Hence
\begin{align*}
\P \Biggl [\sum _{i=1}^{n} X_{i} > C'n \Biggr ]
&\leq e^{-sC'n}\biggl (1+
\frac{Cs}{b-s}\biggr )^{n}
\\
&\leq \exp \Biggl [-n\biggl (sC' - \frac{Cs}{b-s}\biggr )
\Biggr ].
\end{align*}
Rather than optimizing in $s$, we choose $s=b/2$ for the sake of
simplicity and convenience, obtaining
\begin{align*}
\P \Biggl [\sum _{i=1}^{n} X_{i} > C'n \Biggr ]
&\leq e^{-sC'n}\biggl (1+
\frac{Cs}{b-s}\biggr )^{n}
\\
&\leq \exp \Biggl [-n\biggl (\frac{bC'}{2} - C\biggr )
\Biggr ] \leq e
^{-b'n}
\end{align*}
when $C'\geq 2(b'+C)/b$.
\end{proof}






\end{document}